     \def\section{\@startsection{section}{1}%
     \z@{.7\linespacing\@plus\linespacing}{.5\linespacing}%
     {\bfseries%\normalfont\scshape
     \centering
     }}
     \def\@secnumfont{\bfseries}
\newtheorem{theorem}{Theorem}[section]
\newtheorem{lemma}[theorem]{Lemma}
\theoremstyle{definition}
\newtheorem{definition}[theorem]{Definition}
\newtheorem{example}[theorem]{Example}
\theoremstyle{remark}
\numberwithin{equation}{section}
\newcommand{\MM}{\mathbb{M}}
\newcommand{\RR}{\mathbb{R}}
\newcommand{\DDD}{\mathcal{D}}
\newcommand{\FFF}{\mathcal{F}}
\newcommand{\NNN}{\mathcal{N}}
\newcommand{\df}[1]{\,\mathrm{d}#1}                         % Integrator
\newcommand{\dv}{\,\mathrm{d}}                         % Integrator
\newcommand{\pa}{\mathrm{pa}}                                 % Parents
\begin{document}

\setlength{\parindent}{0cm}
\setlength{\parskip}{0.5cm}

\title[Causal interpretation of SDEs]{Causal interpretation of stochastic differential equations}

\author[A. Sokol and N. R. Hansen]{Alexander Sokol and Niels Richard Hansen}

\address{Alexander Sokol: Department of Mathematical Sciences, University of
  Copenhagen, Universitetsparken 5, 2100 Copenhagen \O, Denmark, alexander@math.ku.dk}

\address{Niels Richard Hansen: Department of Mathematical Sciences, University of
  Copenhagen, Universitetsparken 5, 2100 Copenhagen \O, Denmark, niels.r.hansen@math.ku.dk}

\subjclass[2010] {Primary 60H10; Secondary 62A01}

\keywords{Stochastic differential equation, Causality, Structural
  equation model, Identifiability, L{\'e}vy process, Weak conditional
  local independence}

\begin{abstract}
We give a causal interpretation of stochastic differential equations (SDEs)
by defining the postintervention SDE resulting from an intervention in
an SDE. We show that under Lipschitz conditions, the solution to the
postintervention SDE is equal to a uniform limit in probability of
postintervention structural equation models based on the Euler scheme
of the original SDE, thus relating our definition to mainstream causal
concepts. We prove that when the driving noise in the SDE is a
L{\'e}vy process, the postintervention distribution is identifiable
from the generator of the SDE. 
\end{abstract}

\maketitle

\noindent

\section{Introduction}

The notion of causality has long been of interest to both statisticians and
scientists working in fields applying statistics.  In general, causal
models are models containing families of possible distributions of the
variables observed as well as appropriate mathematical descriptions of
causal structures in the data. Thus, claiming that a causal model is
true amounts to claiming more than statements about the
distribution of the variables observed. Causal modeling has several
goals, prominent among them are:

\begin{enumerate}
\item Estimation of intervention effects from fully or partially observed systems with a given causal structure.
\item Identification of the causal structure from observational data.
\end{enumerate}

One of the most developed theories of causal modeling is the approach
based on directed acyclic graphs (DAGs) and finitely many variables with no
explicit time component, descibed in \cite{MR1815675,MR2548166}. In recent
years, there have been efforts to develop similar notions of causality for
stochastic processes, both in discrete time and in continuous time. For
discrete-time results, see for example \cite{MR2354948,MR2925574,MR2575937}. As discrete-time models often are defined
through explicit functional relationships between variables, as in for
example autoregressive processes, such models fit directly into the
DAG-based framework. In the continuous-time framework, the uncountable
number of variables complicates the question of how to describe causal
relationships. 

Early discussions of causality in a continuous-time
framework can be found in \cite{MR916245,MR1403234,MR1395031}. One of
the most recent frameworks for causality in continuous time is based
on the concept of weak conditional local independence. For results related to this, see
\cite{MR2412641,MR2749916,MR2575938,MR2817610,KR2}. An
alternative notion of causality defined solely through filtrations is
developed in \cite{MR2596243,MR2811860}, and a notion of causality
in continuous time for ordinary differential equations is introduced in \cite{MooijJanzing_UAI_13}.

In Section 4.1 of \cite{MR2993496} it is noted that both ordinary
differential equations and stochastic differential equations (SDEs) allow for
a natural interpretation in terms of ``influence'', and that
interventions may be defined by substitutions in the differential
equations. In this paper, we make these ideas precise. Our main
contributions are:

\begin{enumerate}
\item For a given SDE, we give a precise definition of the
  postintervention SDE resulting from an intervention.
\item We show that under certain regularity assumptions, the
  solution of the postintervention SDE is the limit of a sequence of interventions in
  structural equation models based on the Euler scheme of the observational
  SDE.
\item We prove using (2) that for SDEs with a L{\'e}vy process as the driving
  semimartingale, the postintervention distribution is identifiable
  from the generator associated with the SDE.
\end{enumerate}

The definition (1) yields a generic notion of intervention effects for
SDEs applicable to causal inference in the case where an understanding
of the mechanisms of the system under consideration
is absent. The results of point (2) clarifies when we may expect this
generic notion to be applicable.

The result (3) is stated as Theorem
\ref{theorem:LevyDiffusionIdentifiability} and is the main theorem of
this paper. Its importance is as follows. In classical DAG-based models of causality such as developed
in \cite{MR2548166}, neither the DAG nor the effect of interventions
can be uniquely identified from the observational distribution. This
is one of the main difficulties of such causal models, and leads to a
rich and challenging theory for partially identifying intervention
effects, see for example \cite{MR2549555} and the references
therein. Theorem \ref{theorem:LevyDiffusionIdentifiability}
essentially shows that
for L{\'e}vy driven SDE models, the effect of interventions can be
uniquely identified from the observational distribution, meaning that
the intervention effect identification problem present in classical
DAG-based models vanishes for these SDE models.

We expect that this 
result will have considerable applicability for causal inference for
time-dependent observations. As argued in the series of examples comprised by Example
\ref{example:Yeast}, Example \ref{example:OUInterExample} and Example
\ref{example:YeastConclusion}, our results for example lead to a
dynamic modeling framework where  gene knockout effects can be derived from 
observational data -- a difficult problem which has previously only been dealt 
with, \cite{MaathuisNature,MR2549555}, using non-dynamic methods.

Of further particular note is that the identifiability result (3) in the list above
corresponds to a case where the error variables are not all independent, as is
otherwise often assumed to be the case when calculating intervention effects in the DAG-based framework. For
the DAG-based framwork, in the
case of independent errors, parts of the causal structure may be learned
from the observational distribution, as seen in \cite{VP}, and intervention distributions may be calculated by
a truncated factorization formula as in (3.10) of \cite{MR2548166}. For dependent
errors, such results are harder to come by. In our case, we
essentially take advantage of the Markov nature of the solutions to SDEs with L{\'e}vy
noise in order to obtain our identifiability result for SDE models, and we
are also able to obtain explicit descriptions of the resulting
postintervention distributions.

In matters of causality, it is important to distinguish clearly
between definitions, theorems and interpretations. Our definition of
postintervention SDEs will be a purely mathematical construct. It will,
however, have a natural interpretation in terms of causality. Given an SDE model, in order to
use the definition of postintervention SDEs given here to predict the effects
of real-world interventions, it is necessary that the SDE can be
sensibly interpreted as a data-generating mechanism with certain properties:
Specifically, as we will argue in Section
\ref{sec:InterventionInterpret}, it is essentially sufficient that the
driving semimartingales are autonomous in the sense that they may be
assumed not to be directly affected by interventions. This is an
assumption which is not testable from a statistical viewpoint.
It is, nonetheless, an assumption which may be justified by other means in
concrete cases.

The remainder of the paper is organized as follows. In Section \ref{sec:CausalSDE}, we
motivate and introduce our notion of intervention for SDEs. In Section \ref{sec:CausalStructures}, we
review the terminology of causal inference as developed in \cite{MR2548166}
and \cite{MR1815675}, based on structural equation models and directed
acyclic graphs. Section \ref{sec:InterventionInterpret} shows that under certain conditions,
our notion of intervention is equivalent to taking a limit of interventions in the context of structural equation
models based on the Euler scheme of the SDE. In Section
\ref{sec:Identifiability}, we give conditions for postintervention
distributions to be identifiable from the generator of the SDE. Finally, in Section
\ref{sec:Discussion}, we discuss our results. Appendix
\ref{sec:IdentifiabilityProof} contains proofs.

\section{Interventions for stochastic differential equations}

\label{sec:CausalSDE}

In this section, given an SDE, we define the notion of a
postintervention SDE, interpreted as the result of an intervention in a
system described by an SDE. This notion yields a causal interpretation
of stochastic differential equations.

We begin by considering three examples. Example \ref{example:Merton}
is a classical stochastic control problem. The control over a
stochastic process is achieved via a control variable, whose effect 
on the stochastic system is a part of the model assumptions. Such an 
assumption is an (implicit) assumption about a causal relationship, or
at least about how interventions in the system affect the system. Though 
the assumption is plausible in the specific example, we want to
bring attention to its existence. Example \ref{example:Yeast} discusses a case where our
stochastic model, due to the current state of knowledge in the subject
matter field, cannot be derived completely from background mechanisms of the system under
consideration. It is, however, highly desirable to be able to model
and discuss causality and the effect of interventions in this situation. Finally, Example
\ref{example:MotivatingSDE} provides an example where an understanding
of the background mechanisms of a system
provides an SDE model and also provides a candidate for how to
describe the effects of interventions in the system.

\begin{example}
\label{example:Merton}
Consider the following simplified variant of Merton's portfolio selection problem, first formulated in
\cite{Merton1969}. In this problem, we consider the Black-Scholes model for a
financial market in continuous time, consisting of a risk-free asset
with price process $B$ and a risky asset with price process $S$, following the SDEs
\begin{align}
  \dv B_t &= r B_t \dv t, \\
  \dv S_t &= \mu S_t \dv t + \sigma S_t \dv W_t.
\end{align}
Here, $r$ denotes the risk-free interest rate, $\mu$ is the expected
return of the risky asset, and $\sigma$ is the volatility of the risky
asset. Now consider an investor endowed with initial wealth $V$, who
invests a constant fraction $\alpha$ of his wealth at time $t$ in the
risky asset $S$ and holds the remaining fraction $1-\alpha$ of his wealth in the risk-free asset $B$.

Now, as the investor at time $t$ invests $(1-\alpha)V_t$ in the
risk-free asset, yielding ownership of $(1-\alpha)V_t/B_t$ units of this
asset, and invests $\alpha V_t$ in the risky asset, yielding
ownership of $\alpha V_t /S_t$ units of this asset, the arguments in
Chapter 6 of \cite{TB2009} yield that $V$ satisfies
\begin{align}
  \dv V_t &= (1-\alpha)(V_t/B_t) \dv B_t + \alpha (V_t/S_t) \dv
  S_t\notag\\
  &= (1-\alpha)V_t r \dv t + \alpha V_t \mu \dv t + \alpha V_t \sigma \dv W_t\notag\\
  &= ((r+\alpha(\mu-r))V_t)\dv t + \alpha V_t \sigma \dv W_t.
\end{align}
In \cite{Merton1969}, Merton endows the investor with a utility function
$u$, meaning that the utility for the investor of
having wealth $v$ is $u(v)$ and proceeds to solve the problem of
identifying the portfolio (how $\alpha$ should be dynamically chosen), which optimizes the lifetime value of the portfolio over $[0,T]$, given by
\begin{align}
  E e^{-rT}u(V_T),
\end{align}
subject to the constraint that $V_t>0$. The optimal (Markov) control
$\alpha(t, V_t)$, which is a function of time and wealth, can
generally be characterized as a solution to the Hamiton-Jacobi-Bellman
equation, and for some special choices of utility functions an
explicit analytic solution can be found. 

Now notice the following subtle point. In the above, we have
succesfully formulated an optimal control problem, seeking an optional
portfolio for the investor. At no point did it become necessary to
consider what the ``causal effect'' of a particular choice of
portfolio on the wealth process is, as the general financial arguments
of \cite{TB2009} provides for this: A change of
portfolio causes a change in the wealth process, while the opposite is
a somewhat insensible statement without a specified control process. 
This is an example of how, when we
have background knowledge of the effects of real-world choices (such
as the choice of portfolio) on terms of interest (the wealth of the
investor), the causal effects of choices, or interventions, are
determined by our background knowledge. In all these arguments there
is a hidden assumption, namely that the choice of portfolio doesn't
affect the Brownian motion that drives the price process. For small
investors this may be a reasonable assumptions, but it is well known
that large investors can affect the price process by their
investments. Thus in this classical control problem there are 
assumptions about how the control variable affects the system, and
this includes the assumption that the process driving the SDE 
is unaffected by the control variable -- a notion we later refer to
as autonomy of the driving process.
\hfill$\circ$
\end{example}

\begin{example}
\label{example:Yeast}
In this example we discuss the modeling of gene expression in 
the yeast microorganism \textit{Saccharomyces
  Cerevisiae}. The genome of this organism was the first eukaryotic
genome to be completely sequenced, see \cite{Gofiaeu1996}. In general, genes of an
organism are not active at all times, nor are they simply
active or not active. Instead, a gene has a level of expression,
indicating the production rate of the protein corresponding to the gene. An
important question in connection with genomic research is the
understanding of how the expression level of one gene influences the
expression level of other genes. An understanding of such causal networks
would allow analysis of what interventions to make on gene expression, for
example what genes to knock out (that is, turn permanently off) in
order to achieve some particular aim, such as optimal growth of an
organism or optimal production rate of a particular compound of interest.

For this particular microorganism, gene expression data are available,
both for non-mutated specimens and for mutations corresponding to
deletion of particular genes, see \cite{HughesEtAl2000}. Inference of the effect of interventions
based on gene expression levels of non-mutated specimens has
been carried out in \cite{MaathuisNature} using IDA (an acronym for ``Intervention calculus when the DAG is
absent''), see \cite{MR2549555}, and compared to intervention data resulting from
deletion mutants with favorable results.

The method investigated in \cite{MaathuisNature} is not based on a
dynamic model of gene expression, but rather on a multivariate Gaussian
model of cross sectional data. It suffers, for instance, from the
inability to include feedback loops. As a simple alternative suppose
that the $p = 5361$ genes of a non-mutant specimen of \textit{S. Cerevisiae}
evolves according to an 
Ornstein-Uhlenbeck process solving the SDE
\begin{align}
\label{eq:exampleOUForGenes}
  \dv X_t &= B(X_t - A)\dv t + \sigma \dv W_t,
\end{align}
where $B$ is a $p\times p$ matrix, $A$ is a $p$-dimensional vector,
$\sigma$ is a $p \times d$ matrix and $W$ is a $d$-dimensional
Brownian motion. One benefit of such a model is its mean reversion
properties, corresponding to gene expression levels fluctuating over
time, but generally remaining stable over periods of the life of the
specimen. Depending on the data available, standard statistical
methods may then be applied to obtain estimates of some or all of the parameters of
the model, yielding a description of the distribution of
our data.

As discussed above, the effect of knocking out gene $m$ (corresponding to setting $X^m$ to zero for
some $m$) in the model (\ref{eq:exampleOUForGenes}) is of central
importance. However, as we in this case do not have a sufficiently
detailed biochemical understanding of how genes influence each
other over time, it is less obvious than in Example
\ref{example:Merton} how the knockout intervention of gene $m$
affects the system. 

In other words, our lack of a generic concept for causality for SDEs,
applicable in the absence of knowledge of particular mechanisms of
causality, in this case prevents us from considering intervention
effects in our model. 
\hfill$\circ$
\end{example}

\begin{example}
\label{example:MotivatingSDE}
Chemical kinetics is concerned with the evolution of the concentrations
of chemicals over time, given in terms of a number of coupled chemical
reactions, see \cite{MR2222876}. In this example, we consider two
chemicals and derive a simple system of SDEs  from the fundamental
mechanisms of the chemical reactions. If the concentration of one
chemical is fixed (as an alternative to letting it evolve according
to the chemical reactions) the fundamental mechanisms allow us to
obtain an SDE for the concentrations of the remaining chemicals. This
SDE then describes the system after the intervention, and can be obtained from the
original system by a purely mechanical deletion and substitution
process. 

The chemicals are denoted $x$ and $y$ and the
corresponding concentrations are denoted $X$ and $Y$,
respectively. We assume that four reactions are possible, namely:
\begin{eqnarray*}
\emptyset & \xrightarrow{\hspace*{2mm} a \hspace*{2mm}} & y \\
y & \xrightarrow{\hspace*{2mm} b_{12} \hspace*{2mm}} & x \\
x & \xrightarrow{\hspace*{2mm} b_{11} \hspace*{2mm}} & \emptyset \\
y & \xrightarrow{\hspace*{2mm} b_{22} \hspace*{2mm}} & \emptyset
\end{eqnarray*}
Here, the first reaction denotes the creation or influx of chemical
$y$ with constant rate $a$, the second reaction denotes the change of
$y$ into $x$ at rate $b_{12} Y$, and the third and fourth reactions
denote degradation or outflux of $x$ and $y$ with
rates $b_{11}X$ and $b_{22}Y$, respectively. We collect the rates into 
the vector 
\begin{align}
\lambda(X, Y) = \left(\begin{array}{c}
a \\
b_{12} Y \\
b_{11} X \\
b_{22} Y \\
\end{array}
\right).
\end{align}
The so-called
stoichiometric matrix 
\begin{align}
S = \left( 
\begin{array}{cccc}
0 & 1 & -1 & 0 \\
1 & -1 & 0 & -1 
\end{array}
\right)
\end{align}
collects the information about the number of molecules, for each of
the two chemicals (rows), which are created or destroyed by each of the 
four reactions (columns). The rates $\lambda(X,Y)$ and the stoichiometric
matrix $S$ form the fundamental parameters of the system. We are interested
in using $\lambda(X,Y)$ and $S$ to construct a model for the evolution
of $X$ and $Y$ over time.

Several different stochastic and deterministic models
are available. One stochastic model is obtained by considering a Markov jump process on
$\mathbb{N}^2_0$, where each coordinate denotes the total number of
molecules of each chemical $x$ and $y$, and the transition rates are given
in terms of $S$ and $\lambda(X,Y)$. A system of SDEs approximating the
Markov jump process, see \cite{And}, is given by 
\begin{align}
\label{eq:chemSDE}
\left(
\begin{array}{c} 
X_t \\ 
Y_t
\end{array}
\right) 
= \left(
\begin{array}{c} 
X_0 \\ 
Y_0 + a t
\end{array}
\right) + \int_0^t
 B \left(
\begin{array}{c} 
X_s \\ 
Y_s
\end{array}
\right) \ \mathrm{d} s + 
\int_0^t \Sigma(X_s, Y_s) \ \mathrm{d} W_s
\end{align}
where $W_s$ denotes a four-dimensional Wiener process, and the
matrices $\Sigma(x,y)$ and $B$ are given by
\begin{align}
  \Sigma(x, y) &= S \text{diag} \sqrt{\lambda(x, y)} \notag\\
&= \left(\begin{array}{cccc}
0 & \sqrt{b_{12}y} & - \sqrt{b_{11}x} & 0 \\
\sqrt{a} & -\sqrt{b_{12}y} & 0 & -\sqrt{b_{22}y}
\end{array}\right)
\end{align}
and
\begin{align}
B = 
\left(\begin{array}{cc}
-b_{11} &  b_{12} \\
- b_{12} & - b_{22} 
\end{array}\right).
\end{align}
If we are able to fix the concentration $Y_t$ at a level $\zeta$, we effectively
remove the first and last of the reactions and the second will have
the constant rate $b_{12}\zeta$. By arguments as above we then derive the SDE 
\begin{align}
X_t &= X_0 + t b_{12} \zeta  -  \int b_{11} X_s \ \mathrm{d} s 
 +  \int_0^t \sigma(X_s) \ \mathrm{d} \widetilde{W}_s,
\end{align}
with $\widetilde{W}_s$ a two-dimensional Wiener process and $\sigma(x) = (\sqrt{b_{12} \zeta}, -\sqrt{b_{11} x})$.
We observe that this SDE, describing the intervened system, can be obtained from (\ref{eq:chemSDE})
by deleting the equation for $Y_t$ and substituting $\zeta$ for $Y_t$ in the remaining equation. 
\hfill$\circ$
\end{example}

We now proceed to our main definition. Recall that in Example
\ref{example:Yeast}, we were stopped short in our discussion of the
effect of interventions in our model due to the lack of a generic
notion of interventions for SDEs. We will now use the conclusions from Example
\ref{example:MotivatingSDE} to introduce such a generic notion of
interventions.

In the DAG-based framework, the DAG is a direct representation of the
causal structure of the system. We do not directly provide such a
representation of causality for SDEs. In general, the precise meaning of ``causation'' is a point of
contemporary debate, see for example \cite{MR1803167}. For our
purposes, it suffices to take a practical standpoint: The causal
structure of a system is sufficiently elucidated for the purposes of
our discussion if we know the
effects of making interventions in the system. For this reason, we restrict
ourselves in Definition \ref{def:SDEIntervention} to defining the effect of making interventions.

In Example \ref{example:MotivatingSDE}, we obtained results on the
effects of intervention in a system from a model for the entire system. In this
particular example, the resulting model for the intervention was justified by reference
to the fundamental mechanisms (the chemical reactions) driving the
system, and interventions resulted in SDEs modified by substitution and deletion. While noting that this
correspondence between interventions and substitution and deletion in the original
equations may not always be justified, we will use this principle as a
general, purely mathematical definition of interventions in SDEs.

Consider a filtered probability space
$(\Omega,\FFF,(\FFF_t)_{t\ge0},P)$ satisfying the usual conditions,
see \cite{MR2273672} for the definition of this and other notions related to
continuous-time stochastic processes. In order to formalize our
definition in a general framework, let $Z$ be a $d$-dimensional
semimartingale and assume that $a:\RR^p\to\MM(p,d)$ is a continuous
mapping, where $\MM(p,d)$ denotes the space of real $p\times d$
matrices. We consider the stochastic differential equation
\begin{align}
  X^i_t &= X^i_0 + \sum_{j=1}^d \int_0^t a_{ij}(X_{s-})\df{Z^j}_s,
  \qquad i\le p.\label{eq:MainSDE}
\end{align}
This SDE is written in integral form. Using differential and matrix
notation, (\ref{eq:MainSDE}) corresponds to the SDE $\dv X_t =
a(X_{t-})\dv Z_t$ with initial condition $X_0$. In the following,
$x^{-m}$ denotes the $(p-1)$-dimensional vector where the $m$'th
coordinate of $x \in \RR^p$ has been removed.

\begin{definition}
\label{def:SDEIntervention}
Consider some $m\le p$ and $\zeta : \RR^{p-1} \to \RR$. The stochastic differential
equation arising from (\ref{eq:MainSDE}) under the intervention
$X^m_t:= \zeta(X^{-m}_t)$ is the $(p-1)$-dimensional equation
\begin{align}
  (Y^{-m})^i_t &= X^i_0 + \sum_{j=1}^d \int_0^t b_{ij}(Y_{s-}^{-m})\df{Z^j}_s,
  \qquad i\neq m, \label{eq:IntervenedSDE}
\end{align}
where $b:\RR^{p-1}\to\MM(p-1,d)$ is defined by
$b_{ij}(y)=a_{ij}(y_1,\ldots,\zeta(y),\ldots,y_p)$ for $i\neq m$ and
$j\le d$ and the $\zeta(y)$ is on the $m$'th coordinate. 
\end{definition}

By Definition \ref{def:SDEIntervention}, intervening takes a $p$-dimensional SDE as
its argument and yields a $(p-1)$-dimensional SDE as its result. Note that existence and uniqueness
of solutions are not required for Definition \ref{def:SDEIntervention}
to make sense, although we will mainly take interest in cases where
both (\ref{eq:MainSDE}) and (\ref{eq:IntervenedSDE}) have unique
solutions. By Theorem V.7 of \cite{MR2273672}, this is for example the case whenever the
mappings $a$ and $\zeta$ are Lipschitz.

We stress that while Definition \ref{def:SDEIntervention} is motivated
by actual results from Example \ref{example:MotivatingSDE}, we do not
claim that it universally describes the effects of actual interventions
in a system. The discussion in Section \ref{sec:InterventionInterpret}
gives indications for \textit{whether} Definition
\ref{def:SDEIntervention} properly describes causality for a
particular SDE system. Our other results, such as those of Section
\ref{sec:Identifiability}, are devoted to analyze the consequences
\textit{if} Definition \ref{def:SDEIntervention} is a valid
description of the effect of interventions (and thus also a valid
description of the causal structure of the system, since knowing the
effects of interventions yields causal information about the system).

As discussed in Example \ref{example:Yeast}, an intervention with
a constant function $\zeta$ is of some interest, and in the context of gene
expression a knockout intervention, corresponding to $\zeta(y) = 0$, is one of the only
control mechanisms currently possible. If $\zeta$ is a constant we
identify the function with this constant, and we write $X_t^m :=\zeta$ for the intervention that puts the $m$'th coordinate constantly
equal to $\zeta$. 

Also note that the process $Y^{-m}$ above for which the SDE is formulated is a
$(p-1)$-dimensional process indexed by
$\{1,\ldots,p\}\setminus\{m\}$. When $Y^{-m}$ is a solution to
(\ref{eq:IntervenedSDE}), we also define $Y^m_t = \zeta(Y^{-m}_t)$,
and the $p$-dimensional process $Y$ is then the full result of making the
intervention $X^m:= \zeta(X^{-m}_t)$. The process $Y^{-m}$ is simply $Y$ with
its $m$'th coordinate removed. In general, the $p$-dimensional
process $Y$ will not satisfy any $p$-dimensional SDE except in special
cases. One such special case is when $\zeta$ is constant. In this case $Y$ will satisfy the
$p$-dimensional SDE
\begin{align}
  Y^i_t &= Y^i_0 + \sum_{j=1}^d \int_0^t c_{ij}(Y_{s-})\df{Z^j}_s,
  \qquad i\le p, \label{eq:pDimIntervenedSDE}
\end{align}
where $Y^i_0=X^i_0$ for $i\neq m$ and $Y^m_0=\zeta$, and $c:\RR^p\to\MM(p,d)$ is given by letting $c_{ij}(x)=a_{ij}(x)$ for
$i\neq m$ and $c_{mj}(x)=0$ for all $x\in\RR^p$ and $j\le d$.

Assuming that (\ref{eq:MainSDE}) and (\ref{eq:IntervenedSDE}) have unique
solutions for all interventions, we refer to (\ref{eq:MainSDE}) as the
observational SDE, to the solution of (\ref{eq:MainSDE}) as the observational
process, and to the distribution of the solution of
(\ref{eq:MainSDE}) as the observational distribution. We refer to
(\ref{eq:IntervenedSDE}) as the postintervention SDE, to the
solution of (\ref{eq:IntervenedSDE}) as the postintervention process
and to the distribution of the solution to
(\ref{eq:IntervenedSDE}) as the postintervention distribution.
Note how our definition of the postintervention
SDE has the same structure as the SDE obtained in Example
\ref{example:MotivatingSDE} by reference to fundamental mechanisms. 

As a first application of Definition \ref{def:SDEIntervention}, we
show in Example \ref{example:OUInterExample} that by Definition
\ref{def:SDEIntervention}, intervention with constant functions in an Ornstein-Uhlenbeck
process yields another Ornstein-Uhlenbeck process. Recalling Example
\ref{example:Yeast}, we thus find that if Definition
\ref{def:SDEIntervention} is applicable in the SDE model of Example
\ref{example:Yeast}, and if we can identify the correct parameters of
the SDE, then we can reason about the effects of interventions.

\begin{example}
\label{example:OUInterExample}
Let $x_0\in\RR^p$, $A\in\RR^p$, $B\in\MM(p,p)$ and
$\sigma\in\MM(p,d)$. The Ornstein-Uhlenbeck SDE with initial value
$X_0$, mean reversion level $A$, mean reversion speed $B$, diffusion
matrix $\sigma$ and $d$-dimensional driving noise is
\begin{align}
  X_t &= X_0 + \int_0^t B(X_s-A)\df{s}+\sigma W_t,\label{eq:OUSDE}
\end{align}
where $W$ is a $d$-dimensional $(\FFF_t)$ Brownian motion, see Section
II.72 of \cite{MR1796539}. Fix $m\le p$ and $\zeta\in\RR$. Under the intervention
$X^m:=\zeta$, we obtain that the postintervention process satisfies
\begin{align}
  Y^i_t &= X^i_0+\int_0^t \sum_{j\neq m}^p B_{ij}(Y^j_s-A_j)+B_{im}(\zeta-A_m)\df{s}+\sum_{j=1}^d\sigma_{ij} W^j_t.\label{eq:OUSDEInter}
\end{align}
for $i\neq m$. Now let $\tilde{B}$ be the submatrix of $B$ obtained by
removing the $m$'th row and column of $B$, and assume that $\tilde{B}$
is invertible. We then obtain
\begin{align}
  Y^{-m}_t &= X_0^{-m} + \int_0^t \tilde{B}(Y^{-m}_s-\tilde{A})\df{s}+\tilde{\sigma} W_t,\label{eq:OUSDEInter2}
\end{align}
where $\tilde{\sigma}$ is obtained by removing the $m$'th row of
$\sigma$ and $\tilde{A}=\alpha-\tilde{B}^{-1}\beta$, where $\alpha$
and $\beta$ are obtained by removing the $m$'th coordinate from $A$
and from the vector whose $i$'th component is $B_{im}(\zeta-A_m)$,
respectively. Thus, $Y^{-m}$ solves an $(p-1)$-dimensional Ornstein-Uhlenbeck SDE with
initial value $X_0^{-m}$, mean reversion level $\tilde{A}$, mean reversion
speed $\tilde{B}$ and diffusion matrix $\tilde{\sigma}$.
\hfill$\circ$
\end{example}

Now note that for the SDE 
\begin{align}
  \dv X_t &= B(X_t - A)\dv t + \sigma \dv W_t,
\end{align}
considered in Example \ref{example:Yeast}, the solution distribution depends only on $\sigma$ through
$\sigma\sigma^t$. Therefore, the parameters of the SDE are not
uniquely identifiable from the observational distribution. As we thus
cannot identify the parameters of the SDE, it appears that we
cannot identify the postintervention SDE in Definition
\ref{def:SDEIntervention}. 
In Example \ref{example:YeastConclusion}, we show how to use our main
theorem, Theorem \ref{theorem:LevyDiffusionIdentifiability}, on
identifiability of postintervention distributions to circumvent this
problem. Though we cannot identify the postintervention SDE, we can in fact
identify the postintervention distribution. 

Note also that in Example \ref{example:MotivatingSDE}, the matrix 
\begin{align}
\Sigma(X, Y) \Sigma(X, Y)^t = 
\left( \begin{array}{cc}
b_{12} Y + b_{11} X & - b_{12}Y \\
- b_{12} Y & a + b_{12} Y + b_{22} Y 
\end{array}\right)
\end{align}
is not diagonal, implying that the martingale parts of the
semimartingale $(X,Y)$ are not orthogonal. This shows that there are
naturally occuring situations where it is necessary to consider models
with non-orthogonal martingale parts. This is a situation excluded in
the WCLI framework of \cite{MR2575938} and is a motivating factor for
the level of generality in our definition.

\section{Terminology of SEMs, DAGs and interventions}

\label{sec:CausalStructures}

In this section, we review the basic notions related to intervention
calculus for structural equation models (SEMs). For a detailed overview, see
\cite{MR2548166,MR1815675}. We will use these notions in Section
\ref{sec:InterventionInterpret} to interpret our definition of
intervention for SDEs in terms of intervention calculus for structural
equation models.

As remarked above, Definition \ref{def:SDEIntervention} takes an SDE as an
argument and yields another SDE, in contrast to, for example, taking the
distribution of an SDE, and yielding another distribution. This
corresponds to how an intervention in the framework of SEMs, see \cite{MR2548166},
takes a SEM and returns another SEM, instead of taking a distribution
and yielding another distribution. This is a key point, and allows us
in Section \ref{sec:InterventionInterpret} to use SEMs and DAGs to
interpret Definition \ref{def:SDEIntervention}, and view SDEs as a
natural extension of SEMs to continuous time models. 

Let $V$ be a finite set, and let $E$ be a subset of $V\times V$. A
directed graph $G$ on $V$ is a pair $(V,E)$. We refer to $V$ as the vertex set,
and refer to $E$ as the edge set. Note that by this definition, there can
be at most one edge between any pair of vertices. A path is an unbroken series of
vertices and edges such that no vertices are repeated except possibly
the initial and terminal vertices. A directed cycle is a path with the same
initial and terminal vertices and all arrows pointing in the same direction. We say that $G$ is an acyclic directed
graph (DAG) if $G$ contains no directed cycles. Note that this in particular
excludes that the graph contains an edge with the same initial and terminal
vertex. For any graph $G$ and $i\in V$, we write $\pa(i) = \{j\in V\mid (j,i)\in E\}$, and refer to $\pa(i)$
as the parents of the vertex $i$. If we wish to emphazise the graph
$G$, we also write $\pa_G(i)$. 

A structural equation model (SEM) consists of three components:
\begin{enumerate}
\item Two families $(X_i)_{i\in V}$ and $(U_i)_{i\in V}$ of random variables.
\item A directed acyclic graph $G$ on $V$.
\item A set of functional relationships $X_i=f_i(X_{\pa_G(i)},U_i)$.
\end{enumerate}

We refer to $(X_i)_{i\in V}$ as the primary variables and $(U_i)_{i\in
  V}$ as the noise variables. Note that we do not a priori assume that the
noise variables are independent. The idea behind a SEM is that the DAG
provides the sequence in which the functional relationships are
evaluated, thus yielding an algorithm for obtaining the values of
$(X_i)_{i\in V}$ from $(U_i)_{i\in V}$. A SEM does not only yield the
distribution of the variables $(X_i)_{i\in V}$, but also a description
of a data generating mechanism. This is made precise by the notion
of an intervention, see Definition 3.2.1 of \cite{MR2548166}. Chapter
3 of \cite{MR2548166} discusses interventions where a subset of variables
are set to a constant value. We will need to consider a more general
type of interventions where variables are set to values depending on other
variables. Therefore, our definition below extends Definition 3.2.1 of
\cite{MR2548166}. See Chapter 4 of \cite{MR2548166} for more on this
type of interventions.

%Avoid dimensional reduction here? We are not reintroducing the
%intervened coordinates as we were for the definition of SDE
%interventions. If the resulting SEM is indexed by V\setminus A, this
%would appear as a ``permanent'' dimensional reduction and the
%variables $X_i$ for $i\in A$ would appear to be ``outside'' of the
%postintervention SEM. This could be considered awkward and is also at
%this point in rather stark contrast to Pearl's notions of intervention.

%Note that not reducing the dimension leads to reformulation of the results
%about postintervention SDEs.

\begin{definition}
\label{def:SEMIntervention}
Consider a SEM with primary variables $(X_i)_{i\in V}$, noise
variables $(U_i)_{i\in V}$, DAG $G$ and functional relationships
$X_i=f_i(X_{\pa_G(i)},U_i)$. Let $A$ be a subset of $V$, and for $i
\in A$ let $I(i) \subseteq V \backslash A$ and $\zeta_i(X_{I(i)})$ be a function of the primary 
variables with indices in $I(i)$. We form a new graph $G'$ by replacing
$\pa_G(i)$ with $I(i)$ for $i \in A$. We assume that $G'$ is
a DAG. The postintervention SEM obtained by doing $X_i := \zeta_i(X_{I(i)})$ for $i\in A$
is a SEM with primary variables $(X_i)_{i\in V}$, noise
variables $(U_i)_{i\in V}$, DAG $G'$ and functional
relationships obtained by substituting all
occurrences of $X_i$ by $\zeta_i(X_{I(i)})$ for
$i \in A$. 
\end{definition}

In short, Definition \ref{def:SEMIntervention} describes the effect of
intervening and setting $X_i$ for $i\in A$ to be a function of certain
variables in $V\setminus A$. In the case where the $\zeta_i$
are constant, this reduces to Definition 3.2.1 of \cite{MR2548166}.

\section{Interpretation of postintervention SDEs}

\label{sec:InterventionInterpret}

In this section, we show that under Lipschitz conditions on the
coefficients in (\ref{eq:MainSDE}) and the intervention mapping, the solution to the postintervention SDE described in
Definition \ref{def:SDEIntervention} essentially is the limit of a sequence of
postintervention SEMs as described in Definition \ref{def:SEMIntervention} based on the Euler scheme of
(\ref{eq:MainSDE}). We use this to clarify the role of the driving
semimartingales $Z^1,\ldots,Z^d$. Also, we will use this result to prove the main theorem on
identifiability in Section \ref{sec:Identifiability}.

\begin{definition}
\label{def:Signature}
The signature of the SDE (\ref{eq:MainSDE}) is the graph $S$
with vertex set $\{1,\ldots,n\}$ and an edge from $i$ to $j$ if it holds
that there is $k$ such that the mapping $a_{jk}$ is not independent of the $i$'th coordinate.
\end{definition}

Letting $a_{j\cdot}=(a_{j1},\ldots,a_{jd})$, another way of describing the signature $S$ in Definition \ref{def:Signature}
is that there is an edge from $i$ to $j$ if $x_i\mapsto a_{j\cdot}(x)$ is
not constant, or equivalently, there is no edge from $i$ to $j$ if it holds
for all $k$ that $a_{jk}$ does not depend on the $i$'th coordinate.

\begin{definition}
\label{def:LocallyUnaffected}
We say that $X^j$ is locally unaffected by $X^i$ in the SDE
(\ref{eq:MainSDE}) if there is no edge from $i$ to $j$ in the signature of (\ref{eq:MainSDE}).
\end{definition}

Being locally unaffected is a property of two coordinates of an SDE. If
there is no risk of ambiguity, we leave out the SDE and simply state that
$X^j$ is locally unaffected by $X^i$.

The signature is used in the following definition to define a SEM corresponding to the Euler scheme for
(\ref{eq:MainSDE}). With a slight abuse of notation, we choose
in Definition \ref{def:EulerScheme} for convenience to consider the initial variables
$X^1_0,\ldots,X^p_0$ as primary variables, even though these variables have no associated noise variables in the SEM.
This is not a problem as it is nonetheless clear how interventions
for the SEM given in Definition \ref{def:EulerScheme} should be understood.

\begin{definition}
\label{def:EulerScheme}
Fix $T>0$ and consider $\Delta>0$ such that $T/\Delta$ is a natural number. Let
$N=T/\Delta$ and $t_k = k\Delta$. The Euler SEM over $[0,T]$ with
step size $\Delta$ for (\ref{eq:MainSDE}) consists of the following:
\begin{enumerate}
\item The primary variables are the $p(N+1)$ variables in the set
  $(X^\Delta_{t_k})_{0\le k\le N}$, indexed by $\{0,\ldots,N\}\times\{1,\ldots,p\}$.
\item For $1\le k \le N$, the noise variable for the $i$'th coordinate of $X^\Delta_{t_k}$ is
  the $d$-dimensional variable $Z_{t_k}-Z_{t_{k-1}}$.
\item The DAG is the graph $G=(V,E)$ with vertex set $\{0,\ldots,N\}\times\{1,\ldots,p\}$ defined by
having $((i_1,j_1),(i_2,j_2))$ be an edge of $D$ if and only if $i_2 =
i_1 + 1$ and either $j_2=j_1$ or $(j_1,j_2)$ is an edge in the signature of
(\ref{eq:MainSDE}).
\item The functional relationships are given by:
\begin{align}
  (X^\Delta_{t_k})^i &= (X^\Delta_{t_{k-1}})^i + \sum_{j=1}^d a_{ij}(X^\Delta_{t_{k-1}})(Z^j_{t_k}-Z^j_{t_{k-1}}).
\end{align}
\end{enumerate}
\end{definition}

A visualization of the DAG for the SEM of Definition \ref{def:EulerScheme}
is shown in Figure \ref{figure:HMDAG}. The figure shows
how the signature $S$ determines the DAG describing the algorithm
for calculating the variables in the Euler SEMs. Making the
constant intervention $X^1_{t_k}:=\zeta$ for all $k$ corresponds to removing
the top row in Figure \ref{figure:HMDAG}.

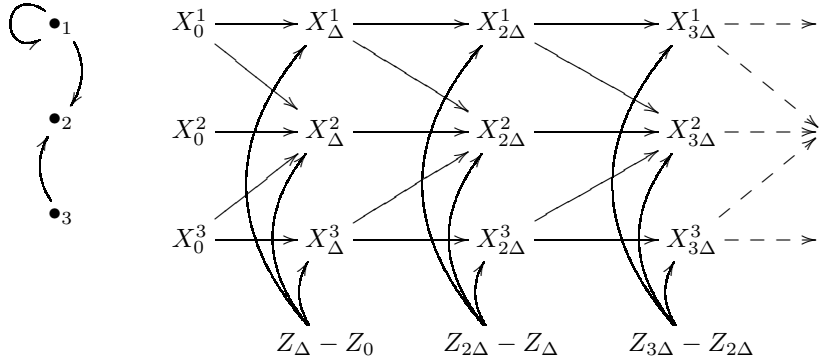
\begin{figure}[htb]
\begin{minipage}[b]{0.20\linewidth}
\centering
\begin{displaymath}
  \xymatrix@C=1cm{\bullet_1 \ar@(ul,dl)[] \ar@/^8pt/[d] \\ \bullet_2 \\ \bullet_3 \ar@/^8pt/[u] \\ }
\end{displaymath}
\end{minipage}
\begin{minipage}[b]{0.700\linewidth}
\centering
\begin{displaymath}
  \xymatrix@C=0.7cm{X^1_0\ar[r]\ar[dr]&X^1_{\Delta}\ar[r]\ar[dr]&X^1_{2\Delta}\ar[r]\ar[dr]&X^1_{3\Delta}\ar@{-->}[r]\ar@{-->}[dr]&\\
                    X^2_0\ar[r]       &X^2_{\Delta}\ar[r]       &X^2_{2\Delta}\ar[r]       &X^2_{3\Delta}\ar@{-->}[r]             &\\
                    X^3_0\ar[r]\ar[ur]&X^3_{\Delta}\ar[r]\ar[ur]&X^3_{2\Delta}\ar[r]\ar[ur]&X^3_{3\Delta}\ar@{-->}[r]\ar@{-->}[ur]&\\
                  & Z_{\Delta}-Z_0 \ar@/^10pt/[u] \ar@/^20pt/[uu] \ar@/^30pt/[uuu] & 
                  Z_{2\Delta}-Z_{\Delta} \ar@/^10pt/[u] \ar@/^20pt/[uu] \ar@/^30pt/[uuu] & 
                  Z_{3\Delta}-Z_{2\Delta} \ar@/^10pt/[u] \ar@/^20pt/[uu] \ar@/^30pt/[uuu] & 
                 }
\end{displaymath}
\end{minipage}
\caption{The signature for a three-dimensional SDE (left) and the DAG for the
  corresponding Euler SEM (right).}
\label{figure:HMDAG}
\end{figure}

Combining the following two lemmas yields the main result of this section.

\begin{lemma}
\label{lemma:EulerConvergence}
Assume that $a:\RR^p\to\MM(p,d)$ is Lipschitz. Fix $T>0$ and let $(\Delta_n)_{n\ge1}$ be a sequence of positive numbers converging to zero such
that $T/\Delta_n$ is natural for all $n\ge1$. For each $n$, there
exists a pathwisely unique solution to the equation
\begin{align}
  (X^n_t)^i &= X^i_0 + \sum_{j=1}^d \int_0^t a_{ij}(X^n_{\eta_n(s-)})\df{Z^j}_s,
  \qquad i\le p,\label{eq:EulerSDE}
\end{align}
where $\eta_n(t) = k\Delta_n$ for $k\Delta_n\le
t<(k+1)\Delta_n$, satisfying that $((X^n)_{t_k})_{0\le k\le T/\Delta_n}$ are
the primary variables in the Euler SEM
for (\ref{eq:MainSDE}), and $\sup_{0\le t\le T}|X_t-X^n_t|$
converges in probability to zero, where $X$ is the solution to (\ref{eq:MainSDE}).
\end{lemma}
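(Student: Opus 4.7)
The plan is to verify the three claims in order: pathwise existence and uniqueness of $X^n$, identification of the grid values $(X^n_{t_k})$ with the primary variables of the Euler SEM, and the ucp convergence $\sup_{0\le t\le T}|X_t-X^n_t|\to 0$ in probability.

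I would dispense with existence and uniqueness of $X^n$ together with claim (2) first. Because $\eta_n(s)$ is constant on each interval $(t_{k-1}, t_k]$, the integrand $a(X^n_{\eta_n(s-)})$ reduces to the constant $a(X^n_{t_{k-1}})$ there, so (\ref{eq:EulerSDE}) forces the explicit pathwise recursion
\begin{align*}
X^n_t = X^n_{t_{k-1}} + a(X^n_{t_{k-1}})(Z_t - Z_{t_{k-1}}), \qquad t \in [t_{k-1}, t_k),
\end{align*}
with $X^n_0 = X_0$. Proceeding inductively in $k$ constructs $X^n$ on $[0,T]$ and shows pathwise uniqueness. Evaluating at the grid points yields exactly the functional relationship required by Definition \ref{def:EulerScheme}, establishing (2).

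The substantive work is the ucp convergence, for which I would use a Gronwall-plus-BDG argument standard in the semimartingale Euler-scheme literature. Decompose
\begin{align*}
X_t - X^n_t = \int_0^t \bigl(a(X_{s-}) - a(X^n_{s-})\bigr)\dv Z_s + \int_0^t \bigl(a(X^n_{s-}) - a(X^n_{\eta_n(s-)})\bigr)\dv Z_s.
\end{align*}
The Lipschitz bound $|a(x)-a(y)|\le L|x-y|$ applied to the first integrand, combined with the Burkholder-Davis-Gundy inequality on the local martingale part of $Z$ and pathwise bounds against the finite variation part, yields a Gronwall-type estimate controlling $\sup_{s\le t}|X_s - X^n_s|$ in terms of the supremum of the second, purely discretization-error integral. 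For the latter, on $(t_{k-1}, t_k]$ we have $X^n_{s-} - X^n_{\eta_n(s-)} = a(X^n_{t_{k-1}})(Z_{s-}-Z_{t_{k-1}})$, and right-continuity of $Z$ together with a truncation of its at most countably many large jumps forces $\sup_{0\le s\le T} |X^n_s - X^n_{\eta_n(s)}| \to 0$ in probability, which in turn makes the second integral vanish in ucp.

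The hard part is making the Gronwall/BDG step rigorous for a general semimartingale driver without bounded jumps or bounded variation on compacts. I would handle this by localizing with stopping times that simultaneously control the total variation of the finite variation part and the quadratic variation of the local martingale part in a suitable decomposition of $Z$, running the BDG/Gronwall estimate on each stopped interval, and then letting the stopping times tend to $T$; see Chapter V of \cite{MR2273672} for the relevant stability apparatus. Since convergence in probability is stable under such localization, this yields the claimed ucp convergence and completes the proof.
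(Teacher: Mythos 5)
The paper itself disposes of this lemma in one line: existence, uniqueness and the identification with the Euler SEM are ``by inspection'' (your interval-by-interval recursion is exactly what that inspection amounts to, and is correct), and the ucp convergence is not proved but quoted from the corollary to Theorem V.16 of \cite{MR2273672}. You instead set out to reprove that corollary, and your skeleton --- the decomposition of $X-X^n$ into a Lipschitz term and a discretization term, plus localization controlling the variation of the finite variation part and the quadratic variation of the local martingale part --- is indeed the right skeleton, essentially Protter's.

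However, there is a genuine gap at the discretization step: the claim that $\sup_{0\le s\le T}\lvert X^n_s - X^n_{\eta_n(s)}\rvert \to 0$ in probability is false whenever $Z$ has jumps, and since (\ref{eq:MainSDE}) is driven by a general semimartingale (and the application in Theorem \ref{theorem:LevyDiffusionIdentifiability} is to L\'{e}vy drivers), jumps cannot be wished away. If $\tau$ is a jump time lying strictly inside a grid cell $(t_{k-1},t_k)$, then for $s\in(\tau,t_k]$ one has $X^n_{s-}-X^n_{\eta_n(s-)}=a(X^n_{t_{k-1}})(Z_{s-}-Z_{t_{k-1}})$, which contains the full jump $\Delta Z_\tau$; hence the supremum is bounded below by roughly $\lvert a(X^n_{\eta_n(\tau)})\Delta Z_\tau\rvert$ for every jump time and does not vanish as $\Delta_n\to0$. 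Nor does your truncation rescue the statement as written: one cannot subtract ``the countably many jumps'' from a semimartingale, since the sum of small jumps need not converge; and if one removes only the finitely many jumps exceeding $\varepsilon$, the discretization supremum for the remaining driver tends (pathwise, after localization) only to something of order $\varepsilon$, not to zero, while the removed jumps contribute a separate finite sum $\sum_\tau\bigl(a(X^n_{\tau-})-a(X^n_{\eta_n(\tau-)})\bigr)\Delta Z_\tau$ that requires its own argument. The correct scheme is to prove $\limsup_n \le C\varepsilon$ and then let $\varepsilon\to0$, and, relatedly, to replace the naive pathwise Gronwall step (unavailable for stochastic integrals) by Protter's iteration over stopping intervals on which the stopped driver has small prelocal $\underline{H}^\infty$ norm. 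Carrying this out is a substantial piece of work --- which is precisely why the paper simply cites the corollary to Theorem V.16 of \cite{MR2273672}; as it stands, your proposal asserts a false intermediate claim in place of that argument.
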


\begin{proof}
By inspection, (\ref{eq:EulerSDE}) has a unique solution, and
$((X^n)_{t_k})_{k\le T/\Delta_n}$ is the primary variables in the
Euler SEM for (\ref{eq:MainSDE}). That $\sup_{0\le t\le T}|X_t-X^n_t|$
converges in probability to zero is the corollary to Theorem V.16 of
\cite{MR2273672}.
\end{proof}

\begin{lemma}
\label{lemma:EulerHMCIntervened}
Fix $T>0$ and consider $\Delta>0$ such that $T/\Delta$ is a natural
number. Fix $m\le p$ and $\zeta:\RR^{p-1}\to\RR$. The Euler SEM for the stochastic
differential equation (\ref{eq:IntervenedSDE}) is equal to the result
of removing
the $m$'th coordinate of the postintervention SEM obtained by the intervention
$(X^\Delta_{t_k})^m := \zeta((X^\Delta_{t_{k-1}})^{-m})$ for
$0\le k\le T/\Delta$ in the Euler SEM for (\ref{eq:MainSDE}).
\end{lemma}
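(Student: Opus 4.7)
The strategy is to unpack the two structural equation models whose equality is asserted, and verify term by term that they agree on all four components of a SEM: primary variables, noise variables, DAG, and functional relationships. Both constructions are fully specified by Definitions \ref{def:EulerScheme}, \ref{def:SDEIntervention}, and \ref{def:SEMIntervention}, so the verification reduces to writing everything out and matching.

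First I would apply Definition \ref{def:EulerScheme} directly to the postintervention SDE (\ref{eq:IntervenedSDE}). This yields the primary variables $(Y^{\Delta,-m}_{t_k})^i$ indexed by $(k,i)$ with $0\le k\le N$ and $i\ne m$, the noise variables $Z_{t_k}-Z_{t_{k-1}}$, a DAG on $\{0,\ldots,N\}\times(\{1,\ldots,p\}\setminus\{m\})$ determined by the signature of (\ref{eq:IntervenedSDE}), and the functional relationship
\[
(Y^{\Delta,-m}_{t_k})^i = (Y^{\Delta,-m}_{t_{k-1}})^i + \sum_{j=1}^d b_{ij}(Y^{\Delta,-m}_{t_{k-1}})(Z^j_{t_k}-Z^j_{t_{k-1}}),
\]
with $b_{ij}$ as in Definition \ref{def:SDEIntervention}. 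Next, starting from the Euler SEM for (\ref{eq:MainSDE}), I would carry out the intervention of Definition \ref{def:SEMIntervention}, which substitutes every occurrence of the variable $(X^\Delta_{t_k})^m$ by $\zeta$ applied to the appropriate $(X^\Delta_{\cdot})^{-m}$. Discarding the $m$-th coordinate leaves, for $i\ne m$, the updates $(X^\Delta_{t_k})^i = (X^\Delta_{t_{k-1}})^i + \sum_j a_{ij}(X^\Delta_{t_{k-1}})(Z^j_{t_k}-Z^j_{t_{k-1}})$, in which the $m$-th entry of the argument of $a_{ij}$ has been replaced by $\zeta$ of the remaining coordinates.

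The key identity that drives the equality is tautological: by Definition \ref{def:SDEIntervention}, $b_{ij}(y) = a_{ij}(y_1,\ldots,\zeta(y),\ldots,y_p)$ with $\zeta(y)$ placed in the $m$-th slot, which is precisely the effect of the substitution just performed. Hence the functional relationships on the $i\ne m$ variables coincide. The noise variables agree trivially since both SEMs use the increments $Z_{t_k}-Z_{t_{k-1}}$ at time step $k$. For the DAG one invokes Definition \ref{def:Signature} applied to (\ref{eq:IntervenedSDE}): an edge $i\to \ell$ appears in the signature of (\ref{eq:IntervenedSDE}) precisely when some $b_{\ell j}$ depends non-trivially on the $i$-th coordinate, which unfolds to the condition that either $a_{\ell j}$ depends on coordinate $i$ or $a_{\ell j}$ depends on coordinate $m$ and $\zeta$ depends on $i$ — exactly the parent structure produced by first intervening on coordinate $m$ in the Euler SEM of (\ref{eq:MainSDE}) and then removing that coordinate. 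The initial values $X^i_0$ for $i\ne m$ are shared by construction.

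The only genuine obstacle in the proof is bookkeeping: one must keep the time indices in the intervention formula aligned so that $\zeta$ ends up being evaluated on the same temporal argument as $a_{ij}$ inside the update, and one must trace parent sets through the graph surgery carefully to confirm the DAG match. Once this is done, no analytic work is required — the equality of the two SEMs is a term-by-term identification flowing from $b_{ij}(y)=a_{ij}(\ldots,\zeta(y),\ldots)$.
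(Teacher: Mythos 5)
Your proposal is correct and takes essentially the same route as the paper's own proof, whose entire content is the tautological identity $b_{ij}(y)=a_{ij}(y_1,\ldots,\zeta(y),\ldots,y_p)$ applied to the Euler update equations: the paper writes out the functional relationships of the two SEMs, notes that the substituted $m$'th coordinate is $\zeta$ evaluated at the same state as the other arguments of $a_{ij}$ (exactly the time-index alignment you flag as the main bookkeeping point), and concludes by inspection, while you additionally verify the noise variables, initial values and DAG explicitly. One minor imprecision in your extra DAG check: the unfolding ``$b_{\ell j}$ depends on coordinate $i$ iff $a_{\ell j}$ depends on $i$, or $a_{\ell j}$ depends on $m$ and $\zeta$ depends on $i$'' holds only as a one-way implication, since cancellations under composition can make $b_{\ell j}$ constant in a coordinate even when the pieces are not; this does not affect the argument, as the paper itself matches only the functional relationships.
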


\begin{proof}
The functional relationships in the Euler SEM for (\ref{eq:MainSDE}) are
\begin{align}
\label{eq:ObservationalEulerSEMFun}
  (X^\Delta_{t_k})^i &= (X^\Delta_{t_{k-1}})^i + \sum_{j=1}^d a_{ij}(X^\Delta_{t_{k-1}})(Z^j_{t_k}-Z^j_{t_{k-1}}),
\end{align}
while for (\ref{eq:IntervenedSDE}) and $i\neq m$, they are
\begin{align}
\label{eq:InterventionEulerSEMFun}
  (Y^\Delta_{t_k})^i &= (Y^\Delta_{t_{k-1}})^i + \sum_{j=1}^d b_{ij}((Y^\Delta_{t_{k-1}})^{-m})(Z^j_{t_k}-Z^j_{t_{k-1}})\notag\\
                     &= (Y^\Delta_{t_{k-1}})^i + \sum_{j=1}^d a_{ij}(Y^\Delta_{t_{k-1}})(Z^j_{t_k}-Z^j_{t_{k-1}}),
\end{align}
where $(Y^\Delta_{t_k})^m = \zeta((Y^\Delta_{t_{k-1}})^{-m})$. By inspection,
(\ref{eq:InterventionEulerSEMFun}) is the result of the stated intervention
in the Euler SEM according to Definition \ref{def:SEMIntervention}.
\end{proof}
%\zeta funktion?

Together, Lemma \ref{lemma:EulerConvergence} and Lemma
\ref{lemma:EulerHMCIntervened} states that the diagram
in Figure \ref{figure:DiscreteTimeIntervention} commutes: Defining
interventions directly in terms of changing the terms in the stochastic
differential equation has the same effect as intervening in the
Euler SEM and taking the limit.

\begin{figure}[htb]
\begin{center}
\begin{displaymath}
  \xymatrix@C=1cm{\fbox{\parbox{0.50\linewidth}{\center Euler SEM for observational SDE}} \ar[r] \ar[d]&
                  \fbox{\parbox{0.30\linewidth}{\center Observational SDE}} \ar[d] \\
                  \fbox{\parbox{0.50\linewidth}{\center Postintervention Euler SEM}} \ar[r] &
                  \fbox{\parbox{0.30\linewidth}{\center Postintervention SDE}} \\
                 }
\end{displaymath}
\end{center}
\caption{The interpretation of intervention in a stochastic
  differential equation understood as the limit of interventions in
  the Euler SEMs.}
\label{figure:DiscreteTimeIntervention}
\end{figure}
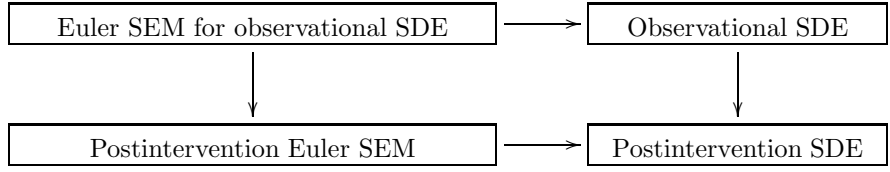

These results clarify what Definition \ref{def:SDEIntervention} means,
and in particular, when this generic definition of intervention is
applicable when background mechanisms are unknown, such as Example
\ref{example:Yeast}. The intuition behind Definition
\ref{def:SDEIntervention} is that interventions are assumed not
to influence the semimartingale $Z$ directly. This is made
concrete by assuming that the family $(Z_{t_k}-Z_{t_{k-1}})_{k\le N}$ are the
noise variables in the Euler SEM, such that there are no arrows in the DAG
for the SEM with terminal vertices in $(Z_{t_k}-Z_{t_{k-1}})_{k\le
  N}$. The lemmas show that when this condition holds true, the notion of intervention
given in Definition \ref{def:SDEIntervention} is consistent with the result of
intervention in the Euler SEM. Note that this does not constitute a proof of causality. Rather, it gives
guidelines as to when it is reasonable to expect that our notion of
intervention will reflect real-world interventions: namely, when none of
the coordinates $X^i$ have a direct effect on the driving
semimartingale $Z$. Whether this is the case or not is in general not a testable assumption.

Furthermore, note that the arrows across columns in the Euler SEM is
determined by the signature of the SDE. Therefore, if we accept the
hypothesis that the DAG of the Euler SEM describes the causal links between
the coordinates of the SDE, then the signature $S$ describes which coordinates of the SDE
(\ref{eq:MainSDE}) are causally dependent on each other in an
infinitesimal sense. Also note that as we are not using the Euler SEMs to draw any conclusions
about the distribution of the variables, we do not require independence of
the noise variables $(Z_{t_k}-Z_{t_{k-1}})_{k\le N}$. In particular,
the variables in the Euler SEM do not need to be Markov with respect to the DAG in the
sense of \cite{MR2548166}.

Concluding this section, we give an example to illustrate that the notion of
intervention given in Definition \ref{def:SDEIntervention}, and the
corresponding causal interpretation outlined above, may not always be
applicable.

\begin{example}
\label{example:DoubleVectorExample}
Let $X^1 = W$ be a one-dimensional
Wiener process, consider a twice continuously
differentiable function $f:\RR\to\RR$ and assume that for all $t\ge0$,
it holds that
\begin{align}
\label{eq:FunctionalOvertRelationship}
  X_t^2 &= f(X_t^1).
\end{align}
We now make the following assumption: Assume that
(\ref{eq:FunctionalOvertRelationship}) represents the actual causal
relationship between $X^1$ and $X^2$, in the sense that the result on
$X^2$ of the intervention  $X^1 := \zeta$ is the process
\begin{align}
\label{eq:FunctionalOvertRelationshipRes}
X^2_t &= f(\zeta).
\end{align}
Now, by It\^{o}'s lemma, it holds that
\begin{eqnarray}
\label{eq:itocausal}
X^2_t & = & f(X_0^1) + \frac{1}{2} \int_0^t
f''(X_s^1) \df{[X^1]_s} + \int_0^t f'(X_s^1) \df{X^1_s}\\
& = & f(0) + \frac{1}{2} \int_0^t
f''(X_s^1) \df{s} +  \int_0^t f'(X_s^1) \df{W_s},
\nonumber
\end{eqnarray}
such that $(X^1,X^2)$ satisfies
\begin{align}
  X^1_t &= \int_0^t \dv W_s \\
  X^2_t &= f(0) + \frac{1}{2} \int_0^t f''(X_s^1) \df{s} +  \int_0^t f'(X_s^1) \df{W_s},
\end{align}
which together yields a two-dimensional SDE of the form given in (\ref{eq:MainSDE}). Therefore, we may apply Definition
\ref{def:SDEIntervention} to this SDE. The resulting
postintervention SDE for $X^2$ under the intervention $X^1 := \zeta$ is
\begin{align}
  X^2_t &= f(0) + \frac{1}{2} \int_0^t f''(\zeta) \df{s} +  \int_0^t f'(\zeta) \df{W_s},
\end{align}
which yields the result $X^2_t = f(0) + \frac{1}{2}f''(\zeta) t + f'(\zeta) W_t$,
in contradiction with our assumed result in
(\ref{eq:FunctionalOvertRelationshipRes}), $X^2_t = f(\zeta)$. This
shows that we may conceptualize ideas about the effects of
interventions which are rather natural, but which are not captured by
Definition \ref{def:SDEIntervention}. This illustrates the importance of the
conclusions made above: We can only argue under certain circumstances that Definition
\ref{def:SDEIntervention} is a reasonable description of the effects
of intervention.
\hfill$\circ$
\end{example}

We note that in Example \ref{example:DoubleVectorExample}, it is not
the use of It\^{o}'s lemma which yields the discrepancy between the
results of Definition \ref{def:SDEIntervention} and the assumed
result, (\ref{eq:FunctionalOvertRelationshipRes}). Rather, it is the
subsequent substitution of $X^1$ by $W$. In fact, if we intervene
directly in (\ref{eq:itocausal}) by replacing $X^1$ by the constant
$\zeta$, the result would be that $X^2_t = f(\zeta)$, in accordance
with (\ref{eq:FunctionalOvertRelationshipRes}). However, Definition
\ref{def:SDEIntervention} does not allow for such interventions on
the integrators. To do so generally would complicate matters, and we
will not pursue this any further.

\section{Identifiability of postintervention distributions}

%Reference til uniqueness in law fra Jacod ej noedvendig, givet resultater
%fra Applebaum? Problem: I Applebaum faas strong uniqueness og uniqueness
%af generatoren, men ikke direkte uniqueness af Feller semigruppen.

\label{sec:Identifiability}

In this section we formulate a result, Theorem \ref{theorem:LevyDiffusionIdentifiability}, giving conditions for the
postintervention distributions to be determined by
uniquely identifiable aspects of the SDE. We show that 
if the SDE is driven by a L{\'e}vy process, the postintervention
distribution is determined by the generator.

To introduce the generator associated with the SDE (\ref{eq:MainSDE}),
when it is driven by a  L{\'e}vy process, 
we need to introduce L{\'e}vy triplets. A L{\'e}vy measure on $\RR^d$
is a measure $\nu$ assigning zero measure to $\{0\}$ such that $x\mapsto\min\{1,\|x\|^2\}$ is integrable with respect to $\nu$. A
$d$-dimensional L{\'e}vy triplet is a triplet $(\alpha,C,\nu)$, where
$\alpha$ is an element of $\RR^d$, $C$ is a positive semidefinite $d\times
d$ matrix and $\nu$ is a L{\'e}vy measure on $\RR^d$. Recall that 
for any bounded neighborhood $D$ of zero in $\RR^d$ and any
$d$-dimensional L{\'e}vy process $X$, there is a L{\'e}vy triplet
$(\alpha,C,\nu)$ such that
\begin{align}
  \label{eq:charfun}
  Ee^{iu^tX_1}
  &=\exp\left(i u^t\alpha
            -\frac{1}{2}u^tCu-\int_{\RR^d}e^{i u^tx}-1-iu^tx 1_D(x)\dv \nu(x)\right),
\end{align}
and this triplet uniquely determines the distribution of $X$, see
Theorem 1.2.14 of \cite{MR2512800}. We refer to
$(\alpha,C,\nu)$ as the characteristics of $X$ with respect to $D$, or as the $D$-characteristic
triplet of $X$. Conversely, for any bounded neighborhood $D$ of zero in $\RR^d$ and
any L{\'e}vy triplet $(\alpha,C,\nu)$, there exists a L{\'e}vy process
having $(\alpha,C,\nu)$ as its $D$-characteristic triplet.

The generator of (\ref{eq:MainSDE}) is defined as a linear operator on the set
$C_0^2(\RR^p)$ of twice continuously differentiable functions such
that the function itself together with all its first and second partial
derivatives vanish at infinity. 

\begin{definition}
\label{def:LevyDiffusionGenerator}
Let $D$ be a bounded neighborhood of zero in $\RR^d$. Consider the SDE
(\ref{eq:MainSDE}), where $Z$ is a $d$-dimensional L{\'e}vy process with $D$-characteristic
triplet $(\alpha,C,\nu)$ and $a:\RR^p\to\MM(p,d)$. We define the
generator $A$ of (\ref{eq:MainSDE}) on $C_0^2(\RR^p)$ by 
\begin{align}
\label{eq:DBasedLevySDEGenerator}
  Af(x)
  &= \sum_{i=1}^p\sum_{j=1}^d a_{ij}(x)\alpha_j\frac{\partial f}{\partial x_i}(x)
   +\frac{1}{2}\sum_{i=1}^p\sum_{j=1}^p
                 (a(x)Ca(x)^t)_{ij}\frac{\partial^2
                   f}{\partial x_i\partial x_j}(x)\notag\\
  &+\int_{\RR^p} f(x+a(x)y)-f(x)-1_D(y)\sum_{i=1}^p \frac{\partial
    f}{\partial x_i}(x)\sum_{j=1}^d a_{ij}(x)y_j\dv \nu(y)
\end{align}
for  $f \in C_0^2(\RR^p)$ and $x \in \RR^p$. 
\end{definition}

It holds that for any choice of $a$ that the generator $A$ is well defined on
$C_0^2(\RR^p)$ with values in the set of functions on $\RR^p$. If
we are willing to put restrictions on $a$, the range of the generator 
can be restricted as well. 

The interest in the generator stems from the fact that when $Z$ is
a L{\'e}vy process, the generator of (\ref{eq:MainSDE}) can usually be determined by the
semigroup of transition probabilities for the Markov
process that solves (\ref{eq:MainSDE}). We state one such result 
here. Lemma \ref{lemma:SDESemigroupExistenceAndUniqueness} is a
folklore result, and follows from the results in Chapter 6 of \cite{MR2512800}.

%Here: ``all solutions'' means as the initial condition varies.
\begin{lemma}
\label{lemma:SDESemigroupExistenceAndUniqueness} If $Z$ is a L{\'e}vy
process and  $a:\RR^p\to\MM(p,d)$ is Lipschitz and
bounded then there exists a unique Feller semigroup $(P_t)$ with the property that all
solutions of (\ref{eq:MainSDE}) are Feller processes with semigroup
$(P_t)$. Moreover, the generator $A$ of (\ref{eq:MainSDE}) satisfies that
\begin{align}
Af &= \lim_{t\to0}t^{-1}(P_tf-P_0f)
\end{align}
for $f \in C_0^2(\RR^p)$, where convergence is in the uniform norm on $C_0^2(\RR^p)$.
\end{lemma}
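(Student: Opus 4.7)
The plan is to first invoke standard Lipschitz SDE theory for Lévy-driven equations (e.g., Theorem~6.2.3 of \cite{MR2512800}) to obtain, for each $x \in \RR^p$, a pathwise unique càdlàg strong solution $X^x$ to (\ref{eq:MainSDE}) with $X_0^x = x$. Setting $P_t f(x) = Ef(X_t^x)$ for $f \in C_b(\RR^p)$, the semigroup identity $P_{s+t}=P_sP_t$ follows by applying pathwise uniqueness to the time-shifted driver $Z_{s+\cdot}-Z_s$, which by stationarity and independent increments is a copy of $Z$ independent of $\FFF_s$. Uniqueness of the Feller semigroup is then immediate since any such semigroup is forced to satisfy $P_tf(x) = Ef(X_t^x)$.

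For the Feller property, a Gronwall argument applied to the SDE under Lipschitz $a$ yields the standard $L^2$ estimate $E\|X_t^x-X_t^y\|^2 \le C_t\|x-y\|^2$, giving continuity of $x \mapsto X_t^x$ in $L^2$; dominated convergence then yields continuity of $P_tf$ for $f \in C_0(\RR^p)$. For vanishing at infinity, boundedness of $a$ and a second Gronwall argument give a uniform estimate $E\|X_t^x-x\|^2 \le K_t$ independent of $x$. Chebyshev's inequality then forces $X_t^x$ out of any fixed compact set in probability as $\|x\|\to\infty$, from which $P_tf(x)\to 0$ follows for $f\in C_0(\RR^p)$ by dominated convergence. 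Strong continuity at $t=0$ follows from the same uniform $L^2$ bound with $K_t\to0$, combined with uniform continuity of $f$ and an $\eps/2$ split using Chebyshev for the tail.

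With the Feller semigroup in hand, I would apply Itô's formula for semimartingales to $f(X_t^x)$ for $f \in C_0^2(\RR^p)$. Substituting the Lévy--Itô decomposition of $Z$ determined by $(\alpha,C,\nu)$, the drift, Brownian, and compensated-jump contributions separate; taking expectations kills the two local martingale terms and leaves
\begin{align*}
P_tf(x) - f(x) = \int_0^t E\,Af(X_{s-}^x)\dv s,
\end{align*}
where $Af$ is exactly the right-hand side of (\ref{eq:DBasedLevySDEGenerator}). Pointwise convergence $t^{-1}(P_tf(x)-f(x))\to Af(x)$ then follows from right-continuity at $0$ of the integrand.

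The main obstacle is upgrading this pointwise convergence to uniform convergence in the supremum norm, that is, to show
\begin{align*}
\sup_{x\in\RR^p}\frac{1}{t}\int_0^t E\bigl|Af(X_{s-}^x)-Af(x)\bigr|\dv s \;\longrightarrow\; 0 \quad\text{as }t\to0.
\end{align*}
A preparatory step is to verify, under boundedness of $a$ and $f \in C_0^2(\RR^p)$, that $Af$ itself lies in $C_0(\RR^p)$: each of the three summands in (\ref{eq:DBasedLevySDEGenerator}) vanishes at infinity — the drift and diffusion terms because $\partial f,\partial^2 f$ do and $a$ is bounded; the jump term by dominated convergence using a second-order Taylor bound on $D$ and boundedness of $f$ on $D^c$. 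Hence $Af$ is bounded and uniformly continuous. The uniform $L^2$ bound $E\|X_{s-}^x-x\|^2\le K_s\to0$ together with uniform continuity of $Af$ and Chebyshev — splitting the expectation into a near part bounded by the modulus of continuity of $Af$ and a tail part bounded by $2\|Af\|_\infty$ times a probability vanishing uniformly in $x$ — delivers the required uniform convergence, completing the identification of $A$ with the strong generator of $(P_t)$ on $C_0^2(\RR^p)$.
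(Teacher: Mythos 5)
Your overall architecture -- strong solutions under Lipschitz $a$, the semigroup $P_tf(x)=Ef(X_t^x)$ via stationary independent increments, the Feller property from continuity in the initial condition, Dynkin's formula from It\^{o}, and the upgrade to uniform convergence via $Af\in C_0(\RR^p)$ -- is exactly the route of the source the paper cites (the paper itself gives no proof, calling the lemma folklore and deferring to Chapter 6 of \cite{MR2512800}). But one step is genuinely broken: every quantitative estimate you use is an $L^2$/Gronwall bound, and these fail for a general L{\'e}vy driver, since $Z_t\in L^2$ if and only if $\int_{\|y\|\ge1}\|y\|^2\dv\nu(y)<\infty$, which is not assumed. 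Concretely, take $Z$ a Cauchy process and $a\equiv1$ (bounded and Lipschitz): then $X_t^x=x+Z_t$ and $E\|X_t^x-x\|^2=\infty$ for every $t>0$, so the claimed uniform bound $E\|X_t^x-x\|^2\le K_t$ is false, and for nonconstant $a$ the estimate $E\|X_t^x-X_t^y\|^2\le C_t\|x-y\|^2$ cannot even be started because the stochastic integrals against the large-jump part are not square-integrable. This invalidates, as written, your proofs of the Feller property, of vanishing at infinity, of strong continuity at $t=0$, and -- most importantly -- of the uniform convergence of $t^{-1}(P_tf-f)$ to $Af$, all of which lean on these moments.

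The standard repair, and what Chapter 6 of \cite{MR2512800} actually does, is interlacing: decompose $Z$ into a square-integrable part (drift plus Brownian part plus compensated jumps in $D$) and an independent compound Poisson process of large jumps. Your Gronwall arguments are valid verbatim for the SDE driven by the first part, and the finitely many large jumps on $[0,t]$ are then inserted pathwise through the continuous update $x\mapsto x+a(x)z$ at jump times whose law does not depend on $x$. Every place you invoked $K_t$ or $C_t$ then goes through with second moments replaced by the statement that $X_s^x\to x$ in probability as $s\to0$, uniformly in $x$, obtained from Chebyshev on the square-integrable part together with $P(\text{at least one large jump in }[0,t])=1-e^{-t\nu(D^c)}$, which is independent of $x$ and vanishes as $t\to0$. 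Your remaining steps are sound as written: the semigroup identity, the uniqueness of the Feller semigroup (deterministic initial conditions pin down $P_tf(x)$), the martingale property of the It\^{o} terms (which needs only bounded $a$, $f\in C_0^2(\RR^p)$ and $\nu(D^c)<\infty$, not moments of $Z$, since the compensated large-jump integrand is bounded by $2\|f\|_\infty$), and the verification that $Af\in C_0(\RR^p)$ via the second-order Taylor bound on $D$ and dominated convergence on $D^c$.
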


For a treatment of the theory of Markov processes and L{\'e}vy
processes, and in particular for notions such as
Feller processes, Feller semigroups, generators,
L{\'e}vy processes and so forth, see 
\cite{MR838085,MR2512800,MR1739520}. We are now ready to state our main result on identifiability.

%SAMME BEGYNDELSESBETINGELSE, SAMME SEMIGRUPPE.
\begin{theorem}
\label{theorem:LevyDiffusionIdentifiability}
Consider the SDEs
\begin{align}
  X^i_t &= X^i_0 + \sum_{j=1}^d \int_0^t a_{ij}(X_s)\df{Z^j}_s,
  \qquad i\le p,\label{eq:IdentifiabilitySDE1}
\end{align}
and
\begin{align}
  \tilde{X}^i_t &= \tilde{X}^i_0 + \sum_{j=1}^d \int_0^t \tilde{a}_{ij}(\tilde{X}_s)\df{\tilde{Z}^j}_s,
  \qquad i\le p,\label{eq:IdentifiabilitySDE2}
\end{align}
where $Z$ is a $d$-dimensional L{\'e}vy process and $\tilde{Z}$ is a
$\tilde{d}$-dimensional L{\'e}vy process. Assume that (\ref{eq:IdentifiabilitySDE1}) and
(\ref{eq:IdentifiabilitySDE2}) have the same
generator, that $a : \RR^p \to \MM(p, d)$ and $\zeta : \RR^{p-1} \to
\RR$ are Lipschitz and that the initial values have the same distribution. Then the postintervention distributions of doing $X^m:=\zeta(X^{-m})$ in
(\ref{eq:IdentifiabilitySDE1}) and doing $\tilde{X}^m:=\zeta(\tilde{X}^{-m})$ in
(\ref{eq:IdentifiabilitySDE2}) are equal for any choice of $\zeta$ and
$m$. 
\end{theorem}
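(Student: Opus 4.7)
The plan is to reduce the theorem to showing that the two postintervention SDEs induce the same generator on $C_0^2(\RR^{p-1})$, and then to apply Lemma \ref{lemma:SDESemigroupExistenceAndUniqueness} --- together with the standard fact that a Feller semigroup is determined by its generator on a core --- to conclude that the two postintervention Feller semigroups coincide; combined with the common initial distribution of $X_0^{-m}$, this forces equality of the postintervention laws.

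To compare the postintervention generators, I would exploit the equality $A = \tilde A$ on $C_0^2(\RR^p)$ via cutoff-pullback test functions. Given $g \in C_0^2(\RR^{p-1})$ and $\phi_R \in C_0^2(\RR)$ with $\phi_R \equiv 1$ on $[-R, R]$, set $f_R(x) := g(x^{-m})\phi_R(x^m) \in C_0^2(\RR^p)$ and evaluate at $x_0 := (y_0^{-m}, \zeta(y_0))$, with $\zeta(y_0)$ inserted in the $m$th slot. For $R > |\zeta(y_0)|$, $\phi_R$ and its first two derivatives at $x_0^m$ take the values $1, 0, 0$, so the drift and diffusion portions of $Af_R(x_0)$ reduce exactly to those of the postintervention generator $A_\zeta g$ at $y_0$, via the identity $a_{ij}(x_0) = b_{ij}(y_0)$. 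For the jump integral, I would send $R \to \infty$ and apply dominated convergence against $\nu$: a second-order Taylor expansion of $g$ combined with $\nu$-integrability of $\min\{1, \|\cdot\|^2\}$ handles small jumps, while boundedness of $g$ together with finiteness of $\nu$ outside a neighborhood of the origin handles large jumps. Passing to the limit in $Af_R(x_0) = \tilde Af_R(x_0)$ yields $(A_\zeta g)(y_0) = (\tilde A_\zeta g)(y_0)$ for every $y_0$ and $g$, hence $A_\zeta = \tilde A_\zeta$ as operators on $C_0^2(\RR^{p-1})$.

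Given $A_\zeta = \tilde A_\zeta$, Lemma \ref{lemma:SDESemigroupExistenceAndUniqueness} applied to each postintervention SDE furnishes a Feller semigroup whose generator, restricted to $C_0^2(\RR^{p-1})$, coincides with $A_\zeta$. Since $C_0^2(\RR^{p-1})$ is a core in this setting, any two Feller semigroups with the same generator on it must agree, so the two postintervention semigroups coincide. The common distribution of the initial values then propagates to equality of the full finite-dimensional distributions of the postintervention processes, which is the desired conclusion.

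The main obstacle I anticipate is the gap between the theorem's Lipschitz hypothesis and the Lipschitz-\emph{and-bounded} hypothesis required by Lemma \ref{lemma:SDESemigroupExistenceAndUniqueness}: the postintervention coefficients $b = a \circ \iota_\zeta$ and $\tilde b = \tilde a \circ \iota_\zeta$, where $\iota_\zeta : \RR^{p-1} \to \RR^p$ inserts $\zeta$ into the $m$th slot, are Lipschitz as compositions of Lipschitz maps but need not be bounded. I would circumvent this by localization: truncate $a$ and $\tilde a$ outside a common large ball to obtain bounded Lipschitz coefficients $a^N, \tilde a^N$, run the generator-comparison argument on the truncated postintervention SDEs to obtain equality of their Feller semigroups, and then pass to the limit $N\to\infty$ using pathwise uniqueness (Theorem V.7 of \cite{MR2273672}) and the non-explosion of Lipschitz-coefficient SDEs driven by L{\'e}vy processes to recover equality of the unstopped postintervention distributions. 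A secondary technical point is the dominated-convergence step in the jump integral, for which one must produce a uniform-in-$R$ bound on the integrand in terms of $\min\{1,\|a(x_0)w\|^2\}$; this follows routinely from the smoothness of $g$ and $\phi_R$ but should be spelled out in the appendix.
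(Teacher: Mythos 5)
Your route is genuinely different from the paper's, and its first half is sound. The cutoff-pullback computation --- evaluating $Af_R$ with $f_R(x)=g(x^{-m})\phi_R(x^m)$ at $x_0=\iota_\zeta(y_0)$, using $\phi_R'(x_0^m)=\phi_R''(x_0^m)=0$ for large $R$, and passing $R\to\infty$ in the jump integral by dominated convergence (with $\sup_R\|\phi_R\|_{C^2}<\infty$, a Taylor bound of order $\min\{1,\|a(x_0)w\|^2\}$ on $D$, and $\nu(D^c)<\infty$ off $D$) --- correctly yields $A_\zeta g(y_0)=\tilde A_\zeta g(y_0)$, i.e.\ equality of the postintervention generators on $C_0^2(\RR^{p-1})$. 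Notably, this bypasses the paper's Lemma \ref{lemma:LevyTypeGeneratorUniqueness} entirely: you never need to identify the pointwise characteristics $(\beta(x),\,a(x)Ca(x)^t,\,T^{a(x)}(\nu))$, whereas the paper extracts exactly these identities and then proves equality in distribution of the one-step Euler updates via the L\'{e}vy--Khintchine formula, feeding them into Lemma \ref{lem:markovintervention} and the convergence Lemmas \ref{lemma:EulerConvergence} and \ref{lemma:EulerHMCIntervened}.

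The genuine gap is your assertion that ``$C_0^2(\RR^{p-1})$ is a core in this setting,'' which carries the entire second half of the argument and is not available from anything in the paper. Lemma \ref{lemma:SDESemigroupExistenceAndUniqueness} says only that each (bounded Lipschitz) postintervention SDE has a unique Feller semigroup whose generator, \emph{restricted} to $C_0^2$, is given by the stated formula; it does not say that this restriction determines the semigroup. Two Feller generators agreeing on $C_0^2$ need not coincide unless $C_0^2$ is a core for at least one of them (equivalently, $(\lambda-A_\zeta)C_0^2$ is dense for some $\lambda>0$), and for L\'{e}vy-type operators with state-dependent, possibly degenerate jump coefficients this is a nontrivial theorem, not a standard fact: it amounts to well-posedness of the $(A_\zeta,C_0^2)$-martingale problem, which one would have to import, e.g., from Kurtz's equivalence of martingale problems and weak solutions for L\'{e}vy-driven SDEs together with a Yamada--Watanabe argument, or from core theorems of Schilling--Schnurr/K\"{u}hn type. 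This is precisely the analytic machinery the paper's Euler-scheme proof is engineered to avoid: by working with exact distributional identities for the discrete-time update functions and invoking only Lemma \ref{lem:markovintervention} and the corollary to Theorem V.16 of \cite{MR2273672}, the paper needs neither semigroup uniqueness nor boundedness of $a$ --- which also dissolves your secondary localization step, since Lemma \ref{lemma:EulerConvergence} requires only the Lipschitz hypothesis. (Your truncation patch is itself repairable --- a translation argument shows $A^Nf(x)=Ah(\pi_N(x))$ with $h(z)=f(z-\pi_N(x)+x)$, so generator equality survives truncation --- but it adds friction your argument does not need once the core issue is recognized as the real obstruction.)
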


Theorem \ref{theorem:LevyDiffusionIdentifiability} is proven in Appendix
\ref{sec:IdentifiabilityProof}. Theorem
\ref{theorem:LevyDiffusionIdentifiability} states that for SDEs with a
L{\'e}vy process as the driving semimartingale, postintervention
distributions are identifiable from the generator. In the remainder of this
section, we discuss the content of Theorem \ref{theorem:LevyDiffusionIdentifiability}.

First, recall that a main theme of the DAG-based framework for causal
inference as in \cite{MR1815675,MR2548166} is to identify conditions for when
postintervention distributions are identifiable from the observational
distribution. Theorem \ref{theorem:LevyDiffusionIdentifiability} gives a
criterion for when postintervention distributions are identifiable from the
generator of the SDE, which is not exactly the same. Nonetheless, in a
large family of naturally occurring cases, the semigroup is identifiable
from the observational distribution. This is for example the case if the solutions to
(\ref{eq:IdentifiabilitySDE1}) and (\ref{eq:IdentifiabilitySDE2}) are
irreducible, as the family of transition probabilities
in this case will be identifiable from the observational distribution,
allowing us to obtain the generator through Lemma \ref{lemma:SDESemigroupExistenceAndUniqueness}.

Next, we comment on the relationship between the result of Theorem
\ref{theorem:LevyDiffusionIdentifiability} and identifiability results of
DAG-based causal inference. Consider the Euler SEM of Definition
\ref{def:EulerScheme}, illustrated in Figure \ref{figure:HMDAG}. In the DAG
of this SEM, the orientation of all arrows is assumed known: All
orientations for arrows from primary variables point forward in time. If the
error variables for each primary variable were independent, it would hold
that the distribution of the variables would be Markov with respect to the
DAG in the sense of \cite{MR2548166}. In this case, by the results of \cite{VP}, we would be able
to identify the skeleton of the graph (that is, its undirected edges) from
the observational distribution. As all orientations are given,
this leads to identifiability of the entire graph. Using the truncated
factorization (3.10) of \cite{MR2548166}, this leads to identifiability of intervention distributions
from the observational distribution. Thus, in this case, identifiability
would not be a surprising result.

%The identifiability mentioned above technically requires the assumption of
%causal minimality (or, alternatively, faithfulness).

However, when the driving semimartingale $Z$ is a
L{\'e}vy process, the error variables are independent across time, but are
not independent across coordinates: For each $k$,
the variables $X^1_{\Delta k},\ldots,X^p_{\Delta k}$ have the same
$d$-dimensional error variable, namely $Z_{\Delta
  k}-Z_{\Delta(k-1)}$, and so the Euler SEM illustrated in Figure
\ref{figure:HMDAG} is not Markov with respect to its DAG. Therefore, our
scenario differs from the conventional causal modeling scenario of \cite{MR2548166}
in two ways: Both by considering a continuous-time model with uncountably
many variables and by considering a particular type of dependent errors. 

We end the section with three examples. Example
\ref{ex:OUIdentifiabilityExplicit} considers a particularly simple
scenario where identifiability of postintervention distributions can
be seen explicitly from the transition probabilities. In Example
\ref{ex:TwoSignatures}, we show that it is possible for two SDEs with
the same distribution to have different signatures. Remarkably, this shows that
while postintervention distributions are identifiable by Theorem
\ref{theorem:LevyDiffusionIdentifiability}, the signature of the true
SDE is not generally identifiable. However, we expect that the
behaviour observed in Example \ref{ex:TwoSignatures} is atypical,
similarly to the absence of faithfulness in Gaussian SEMs, see Theorem 3.2 of
\cite{MR1815675}. Finally, in Example
\ref{example:YeastConclusion}, we show how Theorem
\ref{theorem:LevyDiffusionIdentifiability} allows us to infer
intervention effects of knocking out genes in our previous example on
\textit{S. Cerevisiae}, Example \ref{example:Yeast}.

\begin{example}
\label{ex:OUIdentifiabilityExplicit}
Let $W$ and $\tilde{W}$ be $d$-dimensional and $\tilde{d}$-dimensional
Brownian motions, let $B$ and $\tilde{B}$ be $p\times p$ matrices, and let $\sigma$ and
$\tilde{\sigma}$ be $p\times d$ and $p\times\tilde{d}$ matrices. Consider two processes $X$ and $Y$ being the unique solutions to the
Ornstein-Uhlenbeck SDEs
\begin{align}
  X_t &= X_0 + \int_0^t BX_t\dv t + \sigma W_t
\end{align}
and
\begin{align}
  Y_t &= Y_0 + \int_0^t \tilde{B}X_t\dv t + \tilde{\sigma} W_t.
\end{align}
We will show by a direct analysis that if the generators of the SDEs
are equal and the initial distributions are the same, then the postintervention distributions are equal as well. For
notational simplicity, we consider intervening on the first coordinate,
making the interventions $X^1:=\zeta$ and $Y^1:=\zeta$. It will suffice to show
equality of distributions for the non-intervened coordinates in the
postintervention distributions. Consider block decompositions of the form
\begin{align}
  B = \left(\begin{array}{cc}
            B_{11} & B_{12} \\
            B_{21} & B_{22}
            \end{array}
      \right)\qquad \textrm{and} \qquad
  \sigma = \left(\begin{array}{c}
            \sigma_1 \\
            \sigma_2
            \end{array}
      \right),
\end{align}
where $B_{11}$ is a $1\times 1$ matrix and $B_{22}$ is a $(p-1)\times(p-1)$
matrix and $\sigma_1$ is a $1\times d$ matrix and $\sigma_2$ is a $(p-1)\times d$
matrix. Also consider corresponding decompositions of $\tilde{B}$ and $\tilde{\sigma}$. 

Assume that the generators of the SDEs are equal, and assume that $X_0$ and
$Y_0$ have the same distribution. The transition
probabilities for $X$ and $Y$ are then the same. With $P_t(x,\cdot)$
denoting the transition probability of moving from state $x$ in time $t$
for $X$, the results of \cite{MJ} show that
\begin{align}
  P_t(x,\cdot) =
  \NNN\left(\exp(tB)x,\int_0^t\exp(sB)\sigma\sigma^t\exp(sB^t)\dv s\right),
\end{align}
where the right-hand side denotes a Gaussian distribution, and similarly for the transition probabilities of
$Y$. As these are equal for all $x\in\RR^p$ and $t\ge0$, we obtain $\exp(tB)=\exp(t\tilde{B})$ for all $t\ge0$, so by differentiating,
$B=\tilde{B}$ as well. Likewise, as $\int_0^t\exp(sB)\sigma\sigma^t\exp(sB^t)\dv
s=\int_0^t\exp(s\tilde{B})\tilde{\sigma}\tilde{\sigma}^t\exp(s\tilde{B}^t)\dv
s$ for all $t\ge0$, we obtain
$\sigma\sigma^t=\tilde{\sigma}\tilde{\sigma}^t$. Note that
\begin{align}
  \sigma\sigma^t &= \left(\begin{array}{cc}
                    \sigma_1\sigma_1^t & \sigma_1\sigma_2^t \\
                    \sigma_2\sigma_1^t & \sigma_2\sigma_2^t
                    \end{array}\right),
\end{align}
and similarly for $\tilde{\sigma}\tilde{\sigma}^t$. Therefore, we obtain in
particular that $\sigma_2\sigma_2^t = \tilde{\sigma}_2\tilde{\sigma}_2^t$.

Now, applying Definition
\ref{def:SDEIntervention} and recalling Example
\ref{example:OUInterExample}, the intervened processes minus the first
coordinate, $\tilde{X}^{-1}$ and $\tilde{Y}^{-1}$ (note that the
superscripts do not denote reciprocals), are
Ornstein-Uhlenbeck processes with initial values $X_0^{-1}$ and
$Y_0^{-1}$, mean reversion speeds $B_{22}$ and $\tilde{B}_{22}$, mean
reversion levels $-B_{22}^{-1}B_{21}\zeta$ and $-\tilde{B}_{22}^{-1}\tilde{B}_{21}\zeta$ and diffusion matrices
$\sigma_2$ and $\tilde{\sigma}_2$. As we above concluded that $X_0$ and
$Y_0$ have the same distribution, $B=\tilde{B}$
and $\sigma_2\sigma_2^t = \tilde{\sigma}_2\tilde{\sigma}_2^t$, we obtain
that the distributions of $\tilde{X}^{-1}$ and $\tilde{Y}^{-1}$ must be
equal. Thus, by direct calculation of transition probabilities,
we see that for the Ornstein-Uhlenbeck with zero mean reversion level,
intervention distributions are identifiable from the observational
distribution.
\hfill$\circ$
\end{example}

\begin{example}
\label{ex:TwoSignatures}
Consider the mapping $a:\RR^2\to\MM(2,2)$ defined by 
\begin{align}
  a(x) &= \left[\begin{array}{cc}
                x_1 & 0 \\ x_2^2/\sqrt{x_1^2+x_2^2} & -x_1x_2/\sqrt{x_1^2+x_2^2}
                \end{array}\right]
\end{align}
whenever $x$ is not zero, and $a(0)=0$. This mapping satisfies
\begin{align}
\label{ex:TwoSigTransProd}
  a(x)a(x)^t
  &=  \left[\begin{array}{cc}
                x_1 & 0 \\ x_2^2/\sqrt{x_1^2+x_2^2} & -x_1x_2/\sqrt{x_1^2+x_2^2}
                \end{array}\right]
      \left[\begin{array}{cc}
                x_1 & x_2^2/\sqrt{x_1^2+x_2^2} \\ 0 & -x_1x_2/\sqrt{x_1^2+x_2^2}
                \end{array}\right]\notag\\
  &=\left[\begin{array}{cc}
                x_1^2 & x_1x_2^2/\sqrt{x_1^2+x_2^2} \\
                x_1x_2^2/\sqrt{x_1^2+x_2^2} & x_2^2
                \end{array}\right]
\end{align}
whenever $x\neq0$. We will construct another mapping $\tilde{a}$ which has a different
signature from $a$, but which has the same cross product as $a$,
in the sense of having $\tilde{a}(x)\tilde{a}(x)^t=a(x)a(x)^t$. To do so, define $p:\RR^2\to\MM(2,2)$ by
\begin{align}
  p(x) &= \frac{1}{\sqrt{x_1^2+x_2^2}}\left[\begin{array}{cc}
                x_1 & x_2  \\
                x_2 & -x_1
                \end{array}\right],
\end{align}
for $x\neq0$ and let $p(0)$ be the identity matrix. Put $\tilde{a}(x)=a(x)p(x)$. We then
obtain $\tilde{a}(0)=a(0)=0$ and
\begin{align}
  \tilde{a}(x)
  &= \frac{1}{\sqrt{x_1^2+x_2^2}}
     \left[\begin{array}{cc}
                x_1 & 0 \\ x_2^2/\sqrt{x_1^2+x_2^2} & -x_1x_2/\sqrt{x_1^2+x_2^2}
                \end{array}\right]
     \left[\begin{array}{cc}
           x_1 & x_2  \\
           x_2 & -x_1
     \end{array}\right]\notag\\
  &= \frac{1}{\sqrt{x_1^2+x_2^2}}\left[\begin{array}{cc}
           x_1^2 & x_1x_2  \\
           0 & (x_2^3+x_1^2x_2)/\sqrt{x_1^2+x_2^2}
     \end{array}\right]\notag\\
  &= \left[\begin{array}{cc}
           x_1^2/\sqrt{x_1^2+x_2^2} & x_1x_2/\sqrt{x_1^2+x_2^2}  \\
           0 & x_2
     \end{array}\right].
\end{align}
Note that the first row of $a$ depends only on the first coordinate,
while the second row depends on both coordinates. On the other hand,
the first row of $\tilde{a}$ depends on both coordinates, while the
second row of $\tilde{a}$ depends only on the second coordinate. This
translates into $a$ and $\tilde{a}$ corresponding to different
signatures, shown in Figure \ref{figure:TwoSignaturesEx}.

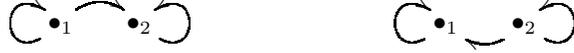
\begin{figure}[htb]
\begin{minipage}[b]{0.400\linewidth}
\vspace{0.5cm}
\centering
\begin{displaymath}
  \xymatrix@C=0.5cm{\bullet_1 \ar@(dl,ul)[] \ar@/^8pt/[r] & \bullet_2 \ar@(dr,ur)[] }
\end{displaymath}
\vspace{0.5cm}
\end{minipage}
\begin{minipage}[b]{0.400\linewidth}
\vspace{0.5cm}
\centering
\begin{displaymath}
  \xymatrix@C=0.5cm{\bullet_1 \ar@(dl,ul)[] & \bullet_2
    \ar@(dr,ur)[] \ar@/^8pt/[l] }
\end{displaymath}
\vspace{0.5cm}
\end{minipage}
\caption{Left: The signature corresponding to $a$. Right: The
  signature corresponding to $\tilde{a}$.}
\label{figure:TwoSignaturesEx}
\end{figure}

As $p(x)$ is orthonormal for all $x$, it holds that
$\tilde{a}(x)\tilde{a}(x)^t=a(x)a(x)^t$ and so the solutions to the two SDEs
\begin{align}
  \dv X_t &= a(X_t)\dv W_t \label{eq:TwoSigEx1}\\  
  \dv X_t &= \tilde{a}(X_t)\dv W_t \label{eq:TwoSigEx2}
\end{align}
have the same distribution. Thus, we have explicitly constructed two
SDEs with the same solution distributions but with different
signatures. Note now that the intervention $X^2:=\zeta$ in
(\ref{eq:TwoSigEx1}) yields an SDE where the first coordinate
satisfies
\begin{align}
\label{eq:IntervenedTwoSigEx1}
  \dv X^1_t &= X^1_t\dv W^1_t
\end{align}
while the intervention $X^2:=\zeta$ in
(\ref{eq:TwoSigEx2}) yields an SDE where the first coordinate
satisfies
\begin{align}
\label{eq:IntervenedTwoSigEx2}
  \dv X^1_t &= \frac{(X^1_t)^2}{\sqrt{(X^1_t)^2+\zeta^2}}\dv W^1_t
              +\frac{X^1_t\zeta}{\sqrt{(X^1_t)^2+\zeta^2}}\dv W^2_t 
\end{align}
The distribution of the solution to (\ref{eq:IntervenedTwoSigEx2}) is
a Markov process whose generator on $C^2_0(\RR)$ is given by
\begin{align}
  Af(x) &= \frac{x^4+(x\zeta)^2}{x^2+\zeta^2}\frac{\dv^2f}{\dv x^2}(x)
        = x^2\frac{\dv^2f}{\dv x^2}(x),
\end{align}
which is the generator of a geometric Brownian motion with zero drift.
This is the same as the generator of the solution to
(\ref{eq:IntervenedTwoSigEx1}). Thus, as required in Theorem
\ref{theorem:LevyDiffusionIdentifiability}, the postintervention
distributions are the same, even in this case where the signatures are different.
\hfill$\circ$
\end{example}

%\dv X_t = a(X_t)dW_t for a_{ij}(x) = B_{ij}x. Interventions fordeling
%af X^i uafhængig af \zeta under intervention X^j:=\zeta såfremt der
%ikke findes nogen sti i signaturen fra j til i?

%Kan problemer med tvetydig signature som ovenfor aflæses fra
%diagonalen af a(x)a(x)^t? Kan resultaterne fortolkes i relation til
%faithfulness og fravær af global Markov egenskab i Euler SEMen?

Example \ref{ex:TwoSignatures} illustrates a rather curious fact:
For SDE models, the postintervention distributions are identifiable
from the observational distribution, even when the signature and thus
the resulting DAGs of the Euler SEMs are not identifiable from the observational
distribution. One interpretation of this is that for SDEs, the
postintervention distributions will be the same for all signatures and
thus all resulting DAGs which are compatible with the observational
distribution. From this perspective, and in concordance with Theorem
\ref{theorem:LevyDiffusionIdentifiability}, the agreement of the two
postintervention distributions in Example \ref{ex:TwoSignatures} is
not so much related to the dependence structure of $a(x)$, but rather
on the dependence structure of $a(x)a(x)^t$, or equivalently,
$\tilde{a}(x)\tilde{a}(x)^t$.

This also indicates that in order to obtain a successful theory of
causality for SDEs, the relevant concept to consider is
postintervention distributions, and not the signatures, since the
latter is identifiable from the observational distribution while the
former is not. This contrasts with the classical DAG-based case, where
a natural methodology consists of first identifying the DAGs
compatible with the observational distribution and then, in order to
partially infer intervention effects, consider the intervention effect
for each possible DAG, as in \cite{MR2549555}.

\begin{example}
\label{example:YeastConclusion}
Consider again the yeast microorganism \textit{S.
  Cerevisiae}. In Example \ref{example:Yeast}, we assumed
that we were given observations $X_{t_0},\ldots,X_{t_p}$ over time of all $p =
5361$ genes of a non-mutant specimen of \textit{S. Cerevisiae}, and
we modeled these observations using an Ornstein-Uhlenbeck process, given by
\begin{align}
\label{eq:exampleOUForGenesConc}
  \dv X_t &= B(X_t - A)\dv t + \sigma \dv W_t.
\end{align}
In Example \ref{example:OUInterExample}, we used Definition
\ref{def:SDEIntervention} to calculate postintervention distributions
from Ornstein-Uhlenbeck processes. We concluded that if Definition
\ref{def:SDEIntervention} is applicable (a non-testable hypothesis,
according to the discussion of Section \ref{sec:InterventionInterpret}) and we could
identify the parameters in (\ref{eq:exampleOUForGenesConc}), then it
would be possible to draw inferences about the SDEs resulting from
interventions such as knocking out a single gene by setting $X^m:=0$. We also concluded that the parameter $\sigma$ is not identifiable in
our model. Thus, we cannot identify the true SDE, and thus cannot
identify the true postintervention SDE.

Assume now, however, that we are satisfied by only knowing the
distributional effects of interventions, corresponding to the
postintervention distribution. In this case, Theorem
\ref{theorem:LevyDiffusionIdentifiability} states that we are in fact
capable of inferring the postintervention distribution from the
observational distribution. One way of understanding this is that for
all SDEs with the same distribution as the observational distribution,
the postintervention distribution will be the same. This can be seen
explicitly in Example \ref{ex:OUIdentifiabilityExplicit}.

This conclusion allows us, for example, to infer the
effects of knocking out genes only from observational distributions.
\hfill$\circ$
\end{example}

\section{Discussion}

\label{sec:Discussion}

In this section, we will reflect on the results of the preceeding
sections and discuss opportunities for further work.

The definition of the postintervention SDE, Definition
\ref{def:SDEIntervention}, is a natural way to define how
interventions should affect stochastic dynamic systems. It constitutes
a generic notion of intervention effects applicable in cases such as
Example \ref{example:Yeast} where the background mechanisms of the
system are not known and the statistical model is primarily based on
observational data. However, the definition reflects assumptions about
the underlying causal nature of the system being modeled, and it is important to make precise
when the definition can be assumed to reflect an actual real-world
intervention and when the definition is simply a mathematical
construct. This is clarified in Section \ref{sec:InterventionInterpret},
where we used the DAG-based intervention calculus to show that the postintervention SDE of Definition
\ref{def:SDEIntervention} can be assumed to reflect real-world
interventions when the following hold:
\begin{enumerate}
\item The SDE reflects a data-generating mechanism in which
  the variables at a given timepoint are obtained as a function of the
  previous timepoints and the driving semimartingales.
\item The driving semimartingales are not directly affected by
  interventions, in the sense that they can be taken to be noise variables in the Euler SEMs.
\end{enumerate}

In full generality, causal mechanisms of a model are not
identifiable from the observational distribution, see
\cite{VP}. However, when considering only restricted classes of
structural equation models, the underlying causal mechanisms may often
be identifiable, see for example \cite{ZH,HJM,PB}. In such cases, linearity of the functional relationships or
Gaussianity of the noise variables often determine identifiability. In
our case, as shown by Theorem
\ref{theorem:LevyDiffusionIdentifiability}, identifiability holds
whenever the driving semimartingale is a L{\'e}vy process. This is a
key result for the applicability of our notion of intervention, and
yields the line of inference depicted in
Figure \ref{figure:LineOfCausality}: Under sufficient regularity
conditions such as appropriate notions of irreducibility, the generator of
a Markov process is identifiable from the observational distribution,
and Theorem \ref{theorem:LevyDiffusionIdentifiability} allows for
deducing postintervention distributions from the generator.

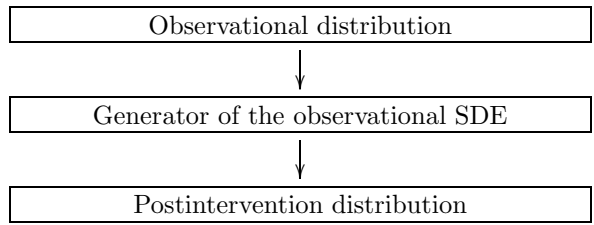
\begin{figure}[htb]
\begin{center}
\begin{displaymath}
  \xymatrix@R=0.5cm@C=1cm{\fbox{\parbox{0.60\linewidth}{\centering Observational distribution}} \ar[d]\\
                  \fbox{\parbox{0.60\linewidth}{\centering Generator of the observational SDE}} \ar[d] \\
                  \fbox{\parbox{0.60\linewidth}{\centering Postintervention distribution}} \\
                 }
\end{displaymath}
\end{center}
\caption{Line of inference for causality in SDEs.}
\label{figure:LineOfCausality}
\end{figure}

As argued in the series of examples comprised by Example
\ref{example:Yeast}, Example \ref{example:OUInterExample} and Example
\ref{example:YeastConclusion}, in the case of for example
time-dependent observations of gene expression data, this allows for
identification of knockout effects of genes using only
observational data.

The proofs given in Section \ref{sec:Identifiability} use
the Markov structure of the solution to the SDE. In the case where the
driving semimartingale has independent, but not stationary,
increments, the solution to the SDE will be an inhomogeneous Markov process, thus
also amenable to operator methods, though requiring more powerful
technical results. We expect that Theorem
\ref{theorem:LevyDiffusionIdentifiability} extends to this
case. It should also be noted that identifiability holds independently of the
dimension of the driving L{\'e}vy process. This is useful, for instance, in
relation to Example \ref{example:MotivatingSDE}. We do not need to 
use the specific SDE driven by a four-dimensional Wiener process. We
can replace the diffusion term in the SDE by a term involving the positive
definite square root of the diffusion matrix and a two-dimensional
Wiener process without affecting the postintervention distribution.

It is, however, important to be careful about the interpretation of the
identifiability result. The result states that when using Definition
\ref{def:SDEIntervention} to model interventions, the postintervention
distributions are identifiable. As discussed above, Definition
\ref{def:SDEIntervention} is not always useful as a notion of
intervention: This requires that we are willing to interpret the SDE
in a particular way. As Example \ref{example:DoubleVectorExample}
shows, not all SDEs are amenable to such an interpretation. This requires 
separate arguments, such as in Example \ref{example:MotivatingSDE}.

We also remark on the connection between our notion of intervention
and the framework of weak conditional local independence (WCLI) discussed in
\cite{MR2749916,MR2575938}. Definition 2 of \cite{MR2575938} defines
WCLI for semimartingales in the class $\DDD'$. In Remark 1 of
\cite{MR2575938}, it is explained how WCLI can lose its interpretation
if extended to larger classes of semimartingales. However,
the definition does make sense for all special semimartingales. Extending it to this class, let $X$ be the solution to an SDE of the
type (\ref{eq:MainSDE}), driven by a L{\'e}vy process and assume that
$X$ is a special semimartingale. One relationship between our notion
of intervention and WCLI is then this: It holds that if $X^i$ is
locally unaffected by $X^m$ in (\ref{eq:MainSDE}), then $X^i$ is WCLI
of $X^m$. This follows by considering the semimartingale
characteristics of solutions to SDEs, see for example Proposition
IX.5.3 of \cite{MR1943877}.

Our results offer opportunities for further research. One main
opportunity concerns latent variables: In the DAG-based framework of
\cite{MR2548166}, the back-door and front-door criteria shows how to
calculate intervention effects from the observational distribution in
the presence of latent variables. For an SDE, the causal structure is
summarized in the signature, see Definition \ref{def:Signature}, which
does not need to be acyclic, reflecting the possibility of
feedback loops. It is an open question how to obtain similar results
in terms of the signature in the case of, for example, a diffusion model
with some coordinates being unobserved. Another question concerns
criteria for when the signature contains useful high-level
information about the effects of interventions. As Example
\ref{ex:TwoSignatures} shows, this is not always the
case. We expect that the behaviour seen in Example
\ref{ex:TwoSignatures} is atypical, but have not shown any precise
results about this.

\appendix
\section{Proof of Theorem \ref{theorem:LevyDiffusionIdentifiability}}
\label{sec:IdentifiabilityProof}

In this appendix, we prove Theorem
\ref{theorem:LevyDiffusionIdentifiability}. We first state 
some well known and some simple results. The first result, Lemma
\ref{lem:markovintervention}, is an elementary yet crucial result about
interventions in discrete time Markov chains, allowing us to use the Euler scheme to prove
 Theorem \ref{theorem:LevyDiffusionIdentifiability}. Two additional
 lemmas are simple facts about generators. We do not give full proofs, but we
do briefly state how to use results from the literature to obtain
full proofs.

For any $G : \RR^p \times \RR^d \to \RR^p$ and $\zeta:\RR^{p-1}\to\RR$ we introduce 
$H_G : \RR^{p-1} \times \RR^d \to \RR^{p-1}$ by 
\begin{align}
  H_G(y,u)^i &= G((y_1,\ldots,\zeta(y),\ldots,y_p),u)^i
\end{align}
for $i \neq m$ with $\zeta(y)$ at the $m$'th position. Now if
$U$ and $V$ are random variables with values in $\RR^d$ and
$\RR^{d'}$, respectively, and if  $G : \RR^p \times \RR^d \to \RR^p$
and $\tilde{G} : \RR^p \times \RR^{d'} \to \RR^p$ fulfill that for all
$x \in \RR^p$ 
\begin{align}
  G(x, U)&\overset{\mathcal{D}}{=}\tilde{G}(x, V),
\end{align}
meaning that the variables are equal in distribution, then obviously
\begin{align}
H_G(y, U)&\overset{\mathcal{D}}{=} H_{\tilde{G}}(y, V).
\end{align}
for all $y \in \mathbb{R}^{p-1}$. The important consequence that we
can derive from this observation is that 
postintervention distributions in discrete-time Markov processes can
be identified from their transition distributions. Specifically,
consider the Markov processes
\begin{align}
  X_n &= G(X_{n-1}, U_n),\\
  \tilde{X}_n &= \tilde{G}(\tilde{X}_{n-1}, V_n),
\end{align}
defined recursively in terms of update functions $G$ and $\tilde{G}$ 
and sequences $(U_n)$ and $(V_n)$ of independent random variables with values in $\RR^d$ and
$\RR^{d'}$, respectively. We also introduce the corresponding intervened 
processes 
\begin{align}
Y_n &= H_G(Y_{n-1}, U_n),\\
\tilde{Y}_n &= H_{\tilde{G}}(\tilde{Y}_{n-1}, V_n).
\end{align}
The following lemma is a simple consequence of the considerations
above.

\begin{lemma}
\label{lem:markovintervention}
If $X_0 \overset{\mathcal{D}}{=} X_0$ and if 
\begin{align}
\label{eq:distid}
G(x, U_n) &\overset{\mathcal{D}}{=} \tilde{G}(x, V_n)
\end{align}
for all $n \geq 1$ and $x
\in \RR^p$, then $(Y_n)$ and $(\tilde{Y}_n)$ have the same
distribution.
\end{lemma}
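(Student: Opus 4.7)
The plan is to exploit the Markov structure of the two intervened processes and combine it directly with the observation stated immediately before the lemma. By that observation, the hypothesis $G(x, U_n) \overset{\mathcal{D}}{=} \tilde{G}(x, V_n)$ for every $x \in \RR^p$ yields, for every $y \in \RR^{p-1}$ and every $n \geq 1$, the identity $H_G(y, U_n) \overset{\mathcal{D}}{=} H_{\tilde{G}}(y, V_n)$. Viewing $(Y_n)$ and $(\tilde{Y}_n)$ as (possibly time-inhomogeneous) discrete-time Markov chains, the time-$n$ one-step transition kernel of $(Y_n)$ is exactly the law of $H_G(y, U_n)$ given the state $y$, and similarly for $(\tilde{Y}_n)$; consequently, the two chains have identical transition kernels at every step.

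Next I would establish equality of the full (finite-dimensional) distributions by induction on $n$. The base case is the assumed equality in distribution of the initial variables $Y_0$ and $\tilde{Y}_0$ (the displayed hypothesis $X_0 \overset{\mathcal{D}}{=} X_0$ is clearly a typographic slip for $Y_0 \overset{\mathcal{D}}{=} \tilde{Y}_0$, the only reading consistent with the conclusion). For the inductive step, assume $(Y_0,\ldots,Y_{n-1}) \overset{\mathcal{D}}{=} (\tilde{Y}_0,\ldots,\tilde{Y}_{n-1})$. Since $(U_n)_{n\geq1}$ is an independent sequence and $Y_0$ is specified before the dynamics start, $U_n$ is independent of $(Y_0,\ldots,Y_{n-1})$, so the conditional law of $Y_n$ given $(Y_0,\ldots,Y_{n-1})$ is the kernel obtained by evaluating the law of $H_G(y, U_n)$ at $y = Y_{n-1}$. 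The identical conditioning applies to the tilde-process, and since both the past joint law and the transition kernel agree, the joint law of $(Y_0,\ldots,Y_n)$ equals that of $(\tilde{Y}_0,\ldots,\tilde{Y}_n)$. This closes the induction and gives equality of the laws of the whole processes.

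I do not expect any substantive obstacle here: the proof is an entirely routine Markov-chain identification once the observation preceding the lemma transfers the hypothesis from $G, \tilde{G}$ to $H_G, H_{\tilde{G}}$. The only detail that deserves explicit mention in the write-up is the independence of $U_n$ from the history $(Y_0,\ldots,Y_{n-1})$, which is immediate from the standing assumptions on the noise sequence.
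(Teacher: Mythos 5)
Your proposal is correct and follows essentially the same route as the paper, which leaves the lemma as a ``simple consequence'' of the preceding observation that $H_G(y,U) \overset{\mathcal{D}}{=} H_{\tilde{G}}(y,V)$ and the Markov structure of the intervened chains; you merely make the implicit induction over finite-dimensional distributions explicit. You also correctly identify the typographic slip in the hypothesis, which should read $Y_0 \overset{\mathcal{D}}{=} \tilde{Y}_0$, and correctly flag the independence of $U_n$ from the history as the one assumption worth stating.
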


Proving Theorem \ref{theorem:LevyDiffusionIdentifiability} via the
Euler scheme will be done by showing that the update formulas for the
Euler schemes for two processes with the same generator satisfy
(\ref{eq:distid}). The following lemma shows how to write
(\ref{eq:DBasedLevySDEGenerator}) of Definition
\ref{def:LevyDiffusionGenerator} in a form which is more suitable
for the subsequent proof.

\begin{lemma}
\label{lemma:LevyDiffusionGeneratorRewrite}
Let $E$ be a neighborhood of zero in $\RR^p$. On $C_0^2(\RR^p)$, the
generator (\ref{eq:DBasedLevySDEGenerator}) of the SDE (\ref{eq:MainSDE}) may be rewritten as
\begin{align}
\label{eq:LevySDEGenerator}
  Af(x)
  &= \sum_{i=1}^p\beta_i(x)\frac{\partial f}{\partial x_i}(x)
  +\frac{1}{2}\sum_{i=1}^p\sum_{j=1}^p
                 (a(x)Ca(x)^t)_{ij}\frac{\partial^2
                   f}{\partial x_i\partial x_j}(x)\notag\\
  &+\int_{\RR^p} f(x+y)-f(x)-1_E(y)\sum_{i=1}^p \frac{\partial
    f}{\partial x_i}(x)y_i\dv T^{a(x)}(\nu)(y),
\end{align}
where $T^{a_0}:\RR^d\to\RR^p$ is defined by $T^{a_0}(y)=a_0y$ for $a_0\in\MM(p,d)$, and
\begin{align}
\label{eq:LevySDEGeneratorRewriteBeta}
  \beta_i(x) &= \sum_{j=1}^d a_{ij}(x) \alpha_j+\int_{\RR^d} (1_{E}(T^{a(x)}(y))-1_D(y))\sum_{j=1}^d a_{ij}(x) y_j\dv \nu(y),
\end{align}
whenever the integrals are well defined and finite. This finiteness condition is in
particular satisfied if $E$ is bounded.
\end{lemma}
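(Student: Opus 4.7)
My plan is to prove Lemma \ref{lemma:LevyDiffusionGeneratorRewrite} by a change of variables in the jump integral of (\ref{eq:DBasedLevySDEGenerator}), followed by a reshuffling of the compensator term into the drift. The proof is essentially a bookkeeping argument relying on standard facts about pushforward measures and the freedom in choosing a truncation function in the L\'evy-Khintchine-type representation.

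First, I would fix $x \in \RR^p$ and treat $a(x) \in \MM(p,d)$ as a constant matrix. By definition of the pushforward, for any nonnegative or $T^{a(x)}(\nu)$-integrable measurable $g:\RR^p \to \RR$,
\begin{align}
\int_{\RR^p} g(z) \,\dv T^{a(x)}(\nu)(z) = \int_{\RR^d} g(a(x)y) \,\dv \nu(y).
\end{align}
Applying this with $g(z) = f(x+z) - f(x)$ (with $f \in C_0^2(\RR^p)$) immediately rewrites the unbounded part of the jump integral of (\ref{eq:DBasedLevySDEGenerator}) as $\int_{\RR^p}(f(x+z)-f(x))\,\dv T^{a(x)}(\nu)(z)$, matching the corresponding term in (\ref{eq:LevySDEGenerator}).

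Next, I handle the compensator term. Note that in (\ref{eq:DBasedLevySDEGenerator}) it reads $-1_D(y)\sum_i \partial_i f(x)(a(x)y)_i$, and the analogous term in (\ref{eq:LevySDEGenerator}) reads $-1_E(z)\sum_i \partial_i f(x) z_i$, which after change of variables becomes $-1_E(T^{a(x)}(y))\sum_i \partial_i f(x)(a(x)y)_i$. Writing $1_D(y) = 1_E(T^{a(x)}(y)) + (1_D(y) - 1_E(T^{a(x)}(y)))$ and using linearity, the discrepancy integral
\begin{align}
\int_{\RR^d}(1_E(T^{a(x)}(y))-1_D(y))\sum_i \partial_i f(x)\sum_j a_{ij}(x) y_j\,\dv\nu(y)
\end{align}
is exactly the drift correction, which combined with $\sum_{i,j}a_{ij}(x)\alpha_j\partial_i f(x)$ produces $\sum_i \beta_i(x)\partial_i f(x)$ with $\beta_i$ as in (\ref{eq:LevySDEGeneratorRewriteBeta}). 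The quadratic term is unchanged.

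The main thing to verify is that the discrepancy integral is finite, so that the reshuffling makes sense termwise. I split $\RR^d$ into a small ball $B_\delta$ around zero and its complement. Since both $D$ and $E$ are neighborhoods of $0$ and $T^{a(x)}$ is continuous with $T^{a(x)}(0)=0$, for $\delta$ small enough the map $y\mapsto a(x)y$ sends $B_\delta$ into $E$ and $B_\delta \subset D$, so the integrand vanishes on $B_\delta$. Outside $B_\delta$, $\nu$ is a finite measure, and on the set $\{a(x)y \in E\}$ the integrand is bounded by $\|a(x)y\| \le \sup_{z\in E}\|z\|$, which is finite when $E$ is bounded. On $\{y\in D\}\setminus B_\delta$, $y$ lies in a bounded set so $\|a(x)y\|$ is bounded as well. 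Hence the integrand is bounded on $\RR^d\setminus B_\delta$ and integrable against the finite measure $\nu|_{B_\delta^c}$. No step should present a real obstacle --- the only subtlety is the finiteness verification above, and it is routine given the standing hypotheses on $\nu$, $D$, and $E$.
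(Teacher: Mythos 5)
Your proposal is correct and matches the paper's argument, which is only sketched there: the paper likewise treats the identity of (\ref{eq:DBasedLevySDEGenerator}) and (\ref{eq:LevySDEGenerator}) as an elementary pushforward-and-rearrangement computation, with finiteness for bounded $E$ coming from the integrability properties of L\'evy measures (your $B_\delta$ splitting is exactly the detail the paper leaves implicit, noting also that $D$ is bounded by Definition \ref{def:LevyDiffusionGenerator}, which your case analysis uses). Nothing further is needed.
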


The proof of Lemma \ref{lemma:LevyDiffusionGeneratorRewrite} is elementary, as
(\ref{eq:DBasedLevySDEGenerator}) and (\ref{eq:LevySDEGenerator}) are equal for all $E$ such that the integrals in
(\ref{eq:LevySDEGenerator}) and (\ref{eq:LevySDEGeneratorRewriteBeta})
are well defined and finite. In the case where $E$ is bounded, this is seen to
be the case by the integrability properties of L{\'e}vy measures.

As a final preparation, we state a lemma on identity of two functionals on
$C_0^2(\RR^p)$. The nontrivial implication of the lemma is proving
that all coefficients are equal if only the functionals are the
same. 
%Se ogsaa Lemma II.2.44 i Jacod og Shiryaev angaaende
%repraesentationer og entydighed. Operatoren nedenfor er af samme
%slags som generatoren for en Levy proces, se Theorem 3.3.3 i
%Applebaum. Applebaum giver dog kun formen for funktioner i Schwartz
%rummet. Ifoelge note 2 p. 140 viser Sato et staerkere
%resultat. Lemmaet nedenfor er staerkere end de entydighedsresultater
%der kan faas ved at relatere til entydighed for Levy processer, da
%der ikke kraeves at B er positivt semidefinit.
\begin{lemma}
\label{lemma:LevyTypeGeneratorUniqueness}
Fix $x\in\RR^p$ and let $D$ be a bounded neighborhood of zero in $\RR^p$. Let $a,\tilde{a}\in\RR^p$ and
$b,\tilde{b}\in\MM(p,p)$, and let $\nu$ and
$\tilde{\nu}$ be two measures on $\RR^p$ such that $x\mapsto\min\{1,\|x\|^2\}$
is integrable with respect to $\nu$ and $\tilde{\nu}$. Consider two linear functionals
$A$ and $\tilde{A}$ from $C^2_0(\RR^p)$ to $\RR$, where $A$ is given by
\begin{align}
  Af &= \sum_{i=1}^p a_i \frac{\partial f}{\partial x_i}(x)
      +\frac{1}{2}\sum_{i=1}^p\sum_{j=1}^p b_{ij} \frac{\partial^2
        f}{\partial x_i\partial x_j}(x)\notag\\
      &+\int_{\RR^p} f(x+y)-f(x)-1_D(y) \sum_{i=1}^p\frac{\partial f}{\partial
        x_i}(x)y_i\dv \nu(y),
\end{align}
and $\tilde{A}$ is given by the same expression, with $\tilde{a}$,
$\tilde{b}$ and $\tilde{\nu}$ substituted for $a$, $b$ and $\nu$. It then holds that
$A=\tilde{A}$ if and only if $a=\tilde{a}$, $b=\tilde{b}$ and
$\nu=\tilde{\nu}$ on $\RR^p\setminus\{0\}$.
\end{lemma}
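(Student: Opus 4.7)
The ``if'' direction is immediate from the formulas. For the ``only if'' direction, the plan is to pick three families of test functions $f \in C_0^2(\RR^p)$ that successively isolate the L{\'e}vy measure, the drift vector, and the diffusion matrix at the point $x$.

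First I would recover $\nu$. Let $f \in C_0^2(\RR^p)$ be supported away from $x$, so that $f(x) = 0$, $\partial_i f(x) = 0$, and $\partial_{ij}^2 f(x) = 0$. Then $Af$ collapses to $\int f(x+y)\dv \nu(y)$, and similarly $\tilde{A}f = \int f(x+y)\dv\tilde\nu(y)$. Setting $g(y) = f(x+y)$, as $f$ ranges over such test functions, $g$ ranges over all of $C_c^\infty(\RR^p \setminus \{0\})$. Since $\nu$ and $\tilde{\nu}$ are Radon measures on $\RR^p \setminus \{0\}$ (the $\min\{1,\|y\|^2\}$-integrability makes them finite on $\RR^p \setminus B(0,\varepsilon)$ for each $\varepsilon>0$), and since $C_c^\infty(\RR^p\setminus\{0\})$ is measure-determining on $\RR^p\setminus\{0\}$, we conclude $\nu = \tilde\nu$ on $\RR^p\setminus\{0\}$.

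Next, with $\nu = \tilde\nu$, the integral terms in $A$ and $\tilde{A}$ coincide for every $f \in C_0^2(\RR^p)$, so subtracting gives
\[
  \sum_{i=1}^p (a_i - \tilde a_i)\partial_i f(x) + \tfrac12 \sum_{i,j=1}^p (b_{ij} - \tilde b_{ij})\partial_{ij}^2 f(x) = 0
\]
for all $f \in C_0^2(\RR^p)$. Fix a bump $\phi \in C_c^\infty(\RR^p)$ with $\phi \equiv 1$ on a neighborhood of $x$, so $\phi(x) = 1$ and $\partial_i\phi(x)=0$. Testing with $f_k(y) = (y_k - x_k)\phi(y)$ yields $\partial_j f_k(x) = \delta_{jk}$ and $\partial_{ij}^2 f_k(x) = 0$, hence $a_k = \tilde a_k$. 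Testing with $f_{kl}(y) = (y_k - x_k)(y_l - x_l)\phi(y)$ yields $\partial_j f_{kl}(x) = 0$ and $\partial_{ij}^2 f_{kl}(x) = \delta_{ik}\delta_{jl} + \delta_{il}\delta_{jk}$, so $b_{kl} + b_{lk} = \tilde b_{kl} + \tilde b_{lk}$; since in the application the matrices $b = a(x)Ca(x)^t$ and $\tilde b$ are symmetric (only the symmetric part of $b$ enters the bilinear form in any case), this gives $b = \tilde b$.

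The only real technical point is the density/measure-determining argument of Step 1, i.e.\ promoting equality of integrals against $C_c^\infty(\RR^p\setminus\{0\})$ to equality of the L{\'e}vy measures on $\RR^p\setminus\{0\}$; this is standard once one uses the local finiteness of $\nu$ and $\tilde{\nu}$ away from the origin, e.g.\ by exhausting $\RR^p\setminus\{0\}$ by the closed annuli $\{\varepsilon \le \|y\| \le 1/\varepsilon\}$ and invoking the Riesz representation theorem on each. The remaining steps are purely algebraic manipulations of Taylor coefficients at $x$.
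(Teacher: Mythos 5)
Your proof is correct and follows essentially the same route as the paper's (sketched) argument: first pin down $\nu$ and $\tilde{\nu}$ on $\RR^p\setminus\{0\}$ by testing against functions vanishing near $x$ (the paper invokes approximate units where you use $C_c^\infty$ test functions, annular exhaustion and Riesz representation), and then recover the first- and second-order coefficients, which the paper leaves as ``from this, $a=\tilde{a}$ and $b=\tilde{b}$ follows'' and you make explicit with the bump-times-polynomial test functions. Your caveat about $b$ is also accurate: since $A$ depends on $b$ only through $\tfrac{1}{2}(b+b^t)$, the lemma as literally stated is implicitly assuming $b,\tilde{b}$ symmetric, which is harmless because in the application $b=a(x)Ca(x)^t$ is symmetric.
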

%Skal bruge et x-uafhaengigt resultat, da generatoren for en Levy SDE
%afhaenger af forskellige Levy maal T_x(\nu) for hvert x. Bemaerk:
%Konklusionen at \nu og \tilde{\nu} er ens kan ogsaa faas naar de to
%neighborhoods ikke er ens, men hvis de ikke er ens, gaar de to
%integral-led ikke ud med hinanden og saa bliver a og \tilde{a} ikke
%ens. Det er saaledes noedvendigt at betragte ens neighborhoods.

%Operatorer har repraesentationer med hensyn til truncation function
%h. Denne tvetydighed skal fjernes for at kunne sammenligne to
%operatorer af denne klasse.

The proof of Lemma \ref{lemma:LevyTypeGeneratorUniqueness} can be obtained as follows. In the notation of the lemma,
assume that $A=\tilde{A}$. Using approximate units such as defined in
\cite{MR2453959}, prove that $\nu$ and $\tilde{\nu}$ agree on all sets
of the form $B^c$ where $B$ is a bounded neighborhood of zero. This implies $\nu=\tilde{\nu}$ on
$\RR^p\setminus\{0\}$. From this, $a=\tilde{a}$ and $b=\tilde{b}$ follows.

Using the  above, we may now make short work of the proof of Theorem
\ref{theorem:LevyDiffusionIdentifiability}.

\textit{Proof of Theorem \ref{theorem:LevyDiffusionIdentifiability}.}
Fix a bounded neighborhood $D$ of zero in $\RR^d$, a bounded neighborhood
$\tilde{D}$ of zero in $\RR^{\tilde{d}}$ and a bounded neighborhood $E$ of
zero in $\RR^p$. Assume that $Z$ has $D$-characteristics $(\alpha,C,\nu)$ and
that $\tilde{Z}$ has $\tilde{D}$-characteristics
$(\tilde{\alpha},\tilde{C},\tilde{\nu})$. For $a_0 \in \MM(p,d)$ define
$T^{a_0}:\RR^d\to\RR^p$ by $T^{a_0}(y)=a_0y$. Also define
\begin{align}
  \beta_i(x) &= \sum_{j=1}^d a_{ij}(x) \alpha_j+\int (1_{E}(a(x)y)-1_D(y))\sum_{j=1}^d a_{ij}(x) y_j\dv \nu(y)\\
  \tilde{\beta}_i(x) &= \sum_{j=1}^{\tilde{d}} \tilde{a}_{ij}(x) \tilde{\alpha}_j+\int (1_{E}(\tilde{a}(x)y)-1_{\tilde{D}}(y))\sum_{j=1}^{\tilde{d}} \tilde{a}_{ij}(x) y_j\dv \tilde{\nu}(y).
\end{align}
Let $A:C_0^2(\RR^p)\to C_0(\RR^p)$ be given by
(\ref{eq:LevySDEGenerator}), and let $\tilde{A}:C_0^2(\RR^p)\to
C_0(\RR^p)$ be given similarly, except with $\beta$, $a$, $C$, $\nu$,
$D$ and $\alpha$ exchanged by $\tilde{\beta}$, $\tilde{a}$,
$\tilde{C}$, $\tilde{\nu}$, $\tilde{D}$ and $\tilde{\alpha}$. By our
assumptions, $A=\tilde{A}$. As a consequence, by Lemma
\ref{lemma:LevyDiffusionGeneratorRewrite} and the uniqueness result of
Lemma \ref{lemma:LevyTypeGeneratorUniqueness}, we find that for all
$x\in\RR^p$ and $i\le p$, we have
\begin{align}
  \beta_i(x)
  &=\tilde{\beta}_i(x),\label{eq:diffDist1}\\
  a(x)Ca(x)^t &= \tilde{a}(x)\tilde{C}\tilde{a}(x)^t,\label{eq:diffDist2}\\
  T^{a(x)}(\nu) &= T^{\tilde{a}(x)}(\tilde{\nu}).\label{eq:diffDist3}
\end{align}
Now let $\Delta>0$ and $t_k = k\Delta$. The Euler scheme for the process
$X$ is a Markov process given by the update function having $i$'th coordinate
\begin{align}
  G(x, U_k)^i &= x^i + \sum_{j=1}^d a_{ij}(x) U_k^j,
\end{align}
with $U_k = Z_{t_k} - Z_{t_{k-1}}$. The Euler scheme for the process
$\tilde{X}$ is likewise given by the update function having $i$'th coordinate
\begin{align}
\tilde{G}(x, V_k)^i &= x^i + \sum_{j=1}^{d'} \tilde{a}_{ij}(x) V_k^j,
\end{align}
with $V_k = \tilde{Z}_{t_k} - \tilde{Z}_{t_{k-1}}$. The characteristic function of $G(x, U_k)$ is
\begin{align}
  E\exp(iu^t (x + a(x)U_k))
  &= \exp(iu^tx)E\exp(i u^t a(x)(Z_{t_k}-Z_{t_{k-1}}))\notag\\
  &= \exp(iu^tx)E\exp(i (a(x)^tu)^t (Z_{\Delta}-Z_0)),
\end{align}
for $u\in\RR^p$. By (\ref{eq:charfun}) and some algebraic manipulations, we therefore have
%%% Se Applebaum Theorem 1.3.3 for E\exp(u^t X_t).
\begin{align}
  \log E\exp(iu^t (x + a(x)U_k))&=iu^t (x + a(x) \Delta\alpha)-\frac{1}{2}\Delta u^ta(x) C a(x)^t u\\
  &-\Delta\int_{\RR^d}e^{i u^ta(x)y}-1-i u^ta(x)y 1_D(y)\dv \nu(y) \notag\\
  &=iu^t(x+\Delta\beta(x)) -\frac{1}{2}\Delta u^ta(x) C a(x)^tu\notag\\
  &-\Delta\int_{\RR^d}e^{i u^t y} - 1 - iu^t y 1_{E}(y) \dv T^{a(x)}(\nu)(y)\notag.
\end{align}
Making the same calculations for the characteristic function of
$\tilde{G}(x,V_k)$ and applying (\ref{eq:diffDist1}), (\ref{eq:diffDist2}) and
(\ref{eq:diffDist3}), we conclude that (\ref{eq:distid}) holds for the
two Euler scheme update functions. Lemma \ref{lem:markovintervention}
therefore allows us to conclude that the postintervention Euler SEMs have
the same distributions. Using Lemma \ref{lemma:EulerConvergence}, we
conclude that the postintervention distributions obtained from the two
SDEs are equal. 
\hfill$\Box$

%Loesning som fungerer bedre end identifikation for faste
%begyndelsesbetingelser: Til hver SDE minus begyndelsesbetingelse hoerer en
%generator. Naar generatorerne er ens, er postintervention fordelingerne
%ogsaa ens. Herefter kan konstateres at generatoren kan identificeres fra
%SDE fordelingerne med begyndelsesbetingelse x\in\RR^p for alle x.

\textbf{Acknowledgements.} The authors would like to thank Marloes Maathuis
for fruitful discussions and comments. We also thank two anonymous
reviewers and an associate editor whose comments led to considerable improvements of the
paper. Finally we thank Steffen Lauritzen, who suggested that a result
similar to Lemma \ref{lem:markovintervention} could be used to prove
Theorem \ref{theorem:LevyDiffusionIdentifiability} via the Euler
scheme. 

\bibliographystyle{amsplain}

\bibliography{full}

\providecommand{\bysame}{\leavevmode\hbox to3em{\hrulefill}\thinspace}
\providecommand{\MR}{\relax\ifhmode\unskip\space\fi MR }
% \MRhref is called by the amsart/book/proc definition of \MR.
\providecommand{\MRhref}[2]{%
  \href{http://www.ams.org/mathscinet-getitem?mr=#1}{#2}
}
\providecommand{\href}[2]{#2}
\begin{thebibliography}{10}

\bibitem{MR2993496}
O.~O. Aalen, K.~R{\o}ysland, J.~M. Gran, and B.~Ledergerber, \emph{Causality,
  mediation and time: {A} dynamic viewpoint}, J. Roy. Statist. Soc. Ser. A
  \textbf{175} (2012), no.~4, 831--861.

\bibitem{And}
D.~Anderson and T.~G. Kurtz, \emph{Continuous time {M}arkov chain models for
  chemical reaction networks}, Design and Analysis of Biomolecular Circuits,
  Springer, Heidelberg, 2011, pp.~3--42.

\bibitem{MR2512800}
D.~Applebaum, \emph{L\'evy processes and stochastic calculus}, second ed.,
  Cambridge Studies in Advanced Mathematics, vol. 116, Cambridge University
  Press, Cambridge, 2009.

\bibitem{TB2009}
T.~Bj{\"o}rk, \emph{Arbitrage theory in continuous time}, Oxford University
  Press, 2009.

\bibitem{MR2749916}
D.~Commenges and A.~G{\'e}gout-Petit, \emph{A general dynamical statistical
  model with causal interpretation}, J. R. Stat. Soc. Ser. B Stat. Methodol.
  \textbf{71} (2009), no.~3, 719--736.

\bibitem{MR1395031}
F.~Comte and E.~Renault, \emph{Noncausality in continuous time models},
  Econometric Theory \textbf{12} (1996), no.~2, 215--256.

\bibitem{MR1803167}
A.~P. Dawid, \emph{Causal inference without counterfactuals}, J. Amer. Statist.
  Assoc. \textbf{95} (2000), no.~450, 407--448, With comments and a rejoinder
  by the author.

\bibitem{MR2412641}
V.~Didelez, \emph{Graphical models for marked point processes based on local
  independence}, J. R. Stat. Soc. Ser. B Stat. Methodol. \textbf{70} (2008),
  no.~1, 245--264.

\bibitem{MR2354948}
M.~Eichler, \emph{Granger causality and path diagrams for multivariate time
  series}, J. Econometrics \textbf{137} (2007), no.~2, 334--353.

\bibitem{MR2925574}
\bysame, \emph{Graphical modelling of multivariate time series}, Probab. Theory
  Related Fields \textbf{153} (2012), no.~1-2, 233--268.

\bibitem{MR2575937}
M.~Eichler and V.~Didelez, \emph{On {G}ranger causality and the effect of
  interventions in time series}, Lifetime Data Anal. \textbf{16} (2010), no.~1,
  3--32.

\bibitem{Gofiaeu1996}
A.~Gofieau et~al., \emph{Life with 6000 genes}, Science \textbf{274} (1996),
  no.~5287, 546--567.

\bibitem{HughesEtAl2000}
T.~R.~Hughes et~al., \emph{Functional discovery via a compendium of expression
  profiles}, Cell \textbf{102} (2000), 109--126.

\bibitem{MR838085}
S.~N. Ethier and T.~G. Kurtz, \emph{Markov processes}, Wiley Series in
  Probability and Mathematical Statistics: Probability and Mathematical
  Statistics, John Wiley \& Sons Inc., New York, 1986, Characterization and
  convergence.

\bibitem{MR1403234}
J.-P. Florens and D.~Fougere, \emph{Noncausality in continuous time},
  Econometrica \textbf{64} (1996), no.~5, 1195--1212.

\bibitem{MR2575938}
A.~G{\'e}gout-Petit and D.~Commenges, \emph{A general definition of influence
  between stochastic processes}, Lifetime Data Anal. \textbf{16} (2010), no.~1,
  33--44.

\bibitem{MR916245}
J.~B. Gill and L.~Petrovi{\'c}, \emph{Causality and stochastic dynamic
  systems}, SIAM J. Appl. Math. \textbf{47} (1987), no.~6, 1361--1366.

\bibitem{MR2453959}
G.~Grubb, \emph{Distributions and operators}, Graduate Texts in Mathematics,
  vol. 252, Springer, New York, 2009.

\bibitem{HJM}
P.~O. Hoyer, D.~Janzing, J.~M. Mooij, J.~Peters, and B.~Sch{\"o}lkopf,
  \emph{Nonlinear causal discovery with additive noise models}, Advances in
  Neural Information Processing Systems 21 (NIPS), MIT Press, 2009,
  pp.~689--696.

\bibitem{MJ}
M.~Jacobsen, \emph{A brief account of the theory of homogeneous {G}aussian
  diffusions in finite dimensions}, Frontiers in Pure and Applied Probability
  1, TVP Science Publishers, 1993, pp.~86--94.

\bibitem{MR1943877}
J.~Jacod and A.~N. Shiryaev, \emph{Limit theorems for stochastic processes},
  second ed., Springer-Verlag, Berlin, 2003.

\bibitem{MaathuisNature}
M.~H. Maathuis, D.~Colombo, M.~Kalisch, and P.~B{\"u}hlmann, \emph{Predicting
  causal effects in large-scale systems from observational data}, Nature
  Methods (2010), no.~4, 247--–248.

\bibitem{MR2549555}
M.~H. Maathuis, M.~Kalisch, and P.~B{\"u}hlmann, \emph{Estimating
  high-dimensional intervention effects from observational data}, Ann. Statist.
  \textbf{37} (2009), no.~6A, 3133--3164.

\bibitem{Merton1969}
R.~C. Merton, \emph{Lifetime portfolio selection under uncertainty: the
  continuous-time case}, Rev. Econ. Stat. \textbf{51} (1969), no.~3, 247--257.

\bibitem{MooijJanzing_UAI_13}
J.~M. Mooij, D.~Janzing, and B.~Sch{\"o}lkopf, \emph{From ordinary differential
  equations to structural causal models: the deterministic case}, Proceedings
  of the 29th Annual Conference on {U}ncertainty in {A}rtificial {I}ntelligence
  ({UAI}-13), 2013.

\bibitem{MR2548166}
J.~Pearl, \emph{Causality}, second ed., Cambridge University Press, Cambridge,
  2009, Models, reasoning, and inference.

\bibitem{PB}
J.~Peters and P.~B{\"u}hlmann, \emph{Identifiability of {G}aussian structural
  equation models with same error variances}, Biometrika, to appear (2014),
  1--11.

\bibitem{MR2811860}
L.~Petrovi{\'c} and S.~Dimitrijevi{\'c}, \emph{Invariance of statistical
  causality under convergence}, Statist. Probab. Lett. \textbf{81} (2011),
  no.~9, 1445--1448.

\bibitem{MR2596243}
L.~Petrovi{\'c} and D.~Stanojevi{\'c}, \emph{Statistical causality, extremal
  measures and weak solutions of stochastic differential equations with driving
  semimartingales}, J. Math. Model. Algorithms \textbf{9} (2010), no.~1,
  113--128.

\bibitem{MR2273672}
P.~E. Protter, \emph{Stochastic integration and differential equations},
  Stochastic Modelling and Applied Probability, vol.~21, Springer-Verlag,
  Berlin, 2005, Second edition. Version 2.1, Corrected third printing.

\bibitem{MR1796539}
L.~C.~G. Rogers and D.~Williams, \emph{Diffusions, {M}arkov processes, and
  martingales. {V}ol. 1}, Cambridge Mathematical Library, Cambridge University
  Press, Cambridge, 2000, Foundations, Reprint of the second (1994) edition.

\bibitem{MR2817610}
K.~R{\o}ysland, \emph{A martingale approach to continuous-time marginal
  structural models}, Bernoulli \textbf{17} (2011), no.~3, 895--915.

\bibitem{KR2}
\bysame, \emph{Counterfactual analyses with graphical models based on local
  independence}, Ann. Statist. \textbf{40} (2012), no.~4, 2162--2194.

\bibitem{MR1739520}
K.-I. Sato, \emph{L\'evy processes and infinitely divisible distributions},
  Cambridge Studies in Advanced Mathematics, vol.~68, Cambridge University
  Press, Cambridge, 1999, Translated from the 1990 Japanese original, Revised
  by the author.

\bibitem{MR1815675}
P.~Spirtes, C.~Glymour, and R.~Scheines, \emph{Causation, prediction, and
  search}, second ed., Adaptive Computation and Machine Learning, MIT Press,
  Cambridge, MA, 2000, With additional material by David Heckerman, Christopher
  Meek, Gregory F. Cooper and Thomas Richardson, A Bradford Book.

\bibitem{VP}
T.~Verma and J.~Pearl, \emph{Equivalence and synthesis of causal models},
  Proceedings of the 6th Annual Conference on Uncertainty in Artificial
  Intelligence (UAI), Elsevier, 1991, pp.~255--268.

\bibitem{MR2222876}
D.~J. Wilkinson, \emph{Stochastic modelling for systems biology}, Chapman \&
  Hall/CRC Mathematical and Computational Biology Series, Chapman \& Hall/CRC,
  Boca Raton, FL, 2006.

\bibitem{ZH}
K.~Zhang and A.~Hyv{\"a}rinen, \emph{On the identifiability of the
  post-nonlinear causal model}, Proceedings of the 25th Annual Conference on
  Uncertainty in Artificial Intelligence (UAI), Elsevier, 2009, pp.~647--655.

\end{thebibliography}

\end{document}